
\documentclass[12pt,a4paper]{amsart}
 \pdfoutput=1
 
 \usepackage{amsmath,amssymb,amsthm}
 \usepackage{bm}  
 \usepackage{mathtools}
 \usepackage{thmtools}
 \usepackage{float} 
 
 \usepackage{xcolor} 	
 \usepackage{hyperref}
 \hypersetup{
 	colorlinks,
     linkcolor={red!60!black},
     citecolor={green!60!black},
     urlcolor={blue!60!black},
 }

 \usepackage[utf8]{inputenc}
  \usepackage{csquotes}
 \usepackage[T1]{fontenc}
 \usepackage{lmodern}
 \usepackage[babel]{microtype}
 \usepackage[english]{babel}
 \usepackage[maxbibnames=99]{biblatex}
 \addbibresource{C:/Users/User/Dropbox/tex/bibfile.bib}

 \linespread{1.19}
 \usepackage{geometry}
 \geometry{left=25mm,right=25mm, top=22.5mm, bottom=22.5mm}

 \usepackage{enumitem}

 \DeclareMathAlphabet{\mymathbb}{U}{BOONDOX-ds}{m}{n}

\theoremstyle{plain}
\newtheorem{theorem}{Theorem}[section]
\newtheorem{fact}[theorem]{Fact}

\newtheorem{claim}[theorem]{Claim}

\newtheorem{lemma}[theorem]{Lemma}

\newtheorem{observation}[theorem]{Observation}

\newtheorem{question}[theorem]{Question}

\newtheorem*{theorem*}{Theorem}
\newtheorem*{corollary*}{Corollary}

\theoremstyle{definition}

\title{Circuit-partition of infinite matroids}

\author{Nathan Bowler}
\address{Nathan Bowler,
Department of Mathematics, University of Hamburg, Bundesstra{\ss}e 55 (Geomatikum), 20146 Hamburg, Germany}
\email{nathan.bowler@uni-hamburg.de}

\author{Attila Jo\'{o}}
\thanks{Funded by the Deutsche Forschungsgemeinschaft (DFG, German Research Foundation) - Grant No. 513023562 and 
partially by NKFIH 
OTKA-129211}
\address{Attila Jo\'{o},
Department of Mathematics, University of Hamburg, Bundesstra{\ss}e 55 (Geomatikum), 20146 Hamburg, Germany}
\email{attila.joo@uni-hamburg.de}

\keywords{Finitary matroid, Circuit partition, Farkas lemma, Laviolette's theorem}
\subjclass[2020]{Primary: 05B35 Secondary: 03E05, 05A18, 05C63} 
\begin{document}
\begin{abstract}
 Komjáth, Milner, and Polat investigated when a finitary matroid admits a partition into circuits. They defined the class of 
 ``finite matching 
 extendable'' matroids and showed in their compactness theorem that those matroids always admit such a partition. Their 
 proof is based on Shelah's 
 singular compactness technique and a careful analysis of certain $\triangle$-systems.
 
 We provide a short, simple proof of their theorem. Then we show that a finitary binary oriented matroid can be partitioned 
 into directed circuits if 
 and only if, in every cocircuit, the cardinality of the negative and positive edges is the same. This generalizes an 
 earlier
 conjecture of Thomassen, 
 settled affirmatively by the second author, about partitioning the edges of an infinite directed graph into directed cycles. As 
 side results, a 
 Laviolette theorem for finitary matroids and a Farkas lemma for finitary binary oriented matroids are proven. An example is 
 given to show that, in 
 contrast to finite oriented matroids, `binary' is essential in the latter result.
\end{abstract}
\maketitle

\section{Introduction}
Decomposing complex structures into simple parts is a common theme in mathematical research, extending beyond 
combinatorics. Cycles are among the most fundamental graph classes, leading to a natural question: When is it possible to partition a graph into 
cycles? For 
edge-partitions of 
finite graphs, the answer is straightforward. On the one hand, each cycle contributes an even amount to the degree of each vertex, so the exclusion 
of 
vertices of odd degree is necessary. On the other hand, if all degrees are even, a desired edge-partition can be constructed 
``greedily''. The 
problem becomes significantly harder for infinite graphs, especially uncountable ones. The absence of vertices of odd degree remains necessary but 
is not sufficient. For example, in a two-way infinite path, each vertex has degree two, yet it does not contain a single cycle.
Nash-Williams 
proved in his seminal paper \cite[p. 235 Theorem 3]{nash1960decomposition} that excluding odd cuts\footnote{Finite cuts 
with an odd number of 
edges.} is sufficient (and obviously necessary). New, simpler proofs of his theorem were found by L. Soukup \cite[Theorem 
5.1]{soukup2011elementary} and Thomassen \cite{thomassen2017nash}. In the same paper, Thomassen conjectured that the 
edges of a directed 
graph can be partitioned into directed cycles if and only if, in every cut, the cardinality of edges going in one direction equals the cardinality of 
edges going in the opposite direction. This was confirmed by the second author \cite[Theorem 
2]{joo2021dicycleDecomp}.

Welsh observed \cite{welsh1969euler} that a finite binary matroid\footnote{Definitions and basic facts about finitary 
matroids are given in 
Subsection \ref{subs: matroid}.} can be partitioned into circuits if and only if it has no odd cocircuit, which generalizes the 
analogous observation 
about graphs. Polat extended this result to finitary binary matroids \cite[Theorem 2.1 (iv) and (vi)]{polat1987compactness}, 
providing a common 
generalization of Nash-Williams' theorem and Welsh's result. Later, together with Komjáth and Milner \cite[Theorem 
1]{komjath1988compactness}, he further generalized his result by proving that matroids, they call ``finite matching 
extendable'' (f.m.e.), can always 
be partitioned into circuits (they call matroids with this property ``matchable''):
\begin{restatable*}[{Komjáth, Milner, Polat; 
\cite[Theorem1]{komjath1988compactness}}]{theorem}{fme}\label{thm: fme match}
Every finitary f.m.e. matroid is matchable.
\end{restatable*}

 Since binary matroids that do not contain odd cocircuits are f.m.e., this is indeed a generalization.  
Their proof is based on Shelah's singular compactness 
technique and a careful analysis of certain $\triangle$-systems. We begin by providing a short and relatively simple 
proof of their 
theorem (Theorem \ref{thm: fme match}).

A natural question is whether it is also possible to generalize the directed cycle edge-partition result \cite[Theorem 
2]{joo2021dicycleDecomp} to 
finitary binary oriented matroids, as Polat generalized Nash-Williams' theorem to finitary binary matroids. The first difficulty 
corresponds to the 
so-called Farkas lemma. In a directed graph, it is easy to see that every directed edge is contained in a directed cycle or a 
directed cut. This is 
known to be true for finite oriented matroids, but we will show that it may fail severely in infinite ones (see Claim \ref{claim: 
Farkas fail}). However, 
we prove that it holds true in the binary case:
\begin{restatable*}[Farkas lemma for finitary binary oriented matroids]{lemma}{Farinf}\label{lem: Farkas 
property}
Let $ \vec{M}=(E, \mathcal{O}) $ be a finitary binary oriented matroid. Then for every $ e\in E $, there is either a positive 
circuit 
or a positive cocircuit through $ e $ but not both.
\end{restatable*}

Using this, we prove that a finitary 
binary oriented matroid 
can be partitioned into directed circuits if and only if, in every signed cocircuit, the cardinality of the negative and positive 
edges is 
the same (in which case we refer to that oriented matroid as ``balanced''):
\begin{restatable*}{theorem}{orparti}\label{thm: poscirc partition}
A finitary binary oriented matroid can be partitioned into directed circuits if and only if it is balanced.
\end{restatable*}

Let $\lambda$ be an infinite cardinal. An edge-partition $\{ G_i: i \in I \}$ of a graph $G$ into subgraphs of size at most 
$\lambda$ is called 
$\lambda$-bond faithful if in each $G_i$ every bond of size less than $\lambda$ is also a bond of $G$, and each bond of $G$ 
of size at most 
$\lambda$ is a bond of a suitable $G_i$. Laviolette proved that an $\aleph_0$-bond faithful partition always exists, and, 
under the Generalized 
Continuum Hypothesis, $\lambda$-bond faithful partitions exist for every $\lambda$ (see \cite[Proposition 
3]{laviolette2005decompositions}). L. 
Soukup showed that the Generalized Continuum Hypothesis can be omitted (see \cite[Theorem 
6.2]{soukup2011elementary}).  We define $ \lambda $-cocircuit faithful 
partitions of matroids analogously to $\lambda$-bond faithful partitions,  and prove the following  generalization of 
Laviolette's theorem:
\begin{restatable*}[Laviolette theorem for finitary matroids]{theorem}{Laviolette}\label{thm: finitary 
matroid Lavio}
For every infinite cardinal $\lambda $, every finitary matroid $ M=(E, \mathcal{C}) $ admits a $ \lambda $-cocircuit faithful 
partition. 
\end{restatable*}

This paper is organized as follows. In the next section, we survey the basic facts and notation related to set theory and matroid 
theory that we will 
use. Sections \ref{sec: unoriented} and \ref{sec: Lavio}, due to the second author, discuss the new short proof of the 
compactness theorem by 
Komjáth et al. and the generalization of Laviolette's theorem. Sections \ref{sec: farkas} and \ref{sec: oriented} are based on 
previous works of the 
first author with Carmesin, who kindly gave us permission to use them. Their respective contents are the generalization of the 
Farkas lemma 
and the directed circuit partition result. The framework of our proofs involves cutting up uncountable matroids by a chain of 
elementary 
submodels, a method popularized by L. Soukup in \cite{soukup2011elementary}. Finally, we raise a 
couple of open questions in Section \ref{sec: open questions}.

\section{Notation and preliminaries}\label{sec: prelim}
\subsection{Set theory}\label{subs: set theory}
We use variables $ \alpha $ and $ \beta $ to stand for ordinal numbers, while $ \kappa $, $ \lambda $ and $ \Theta $ denote 
cardinals. An infinite cardinal $\kappa$ is called \emph{regular} if it cannot be expressed as the sum of fewer than $\kappa$ many 
cardinals, each of which is less than $\kappa$.   We write $ \bigcup X $  for the union of the sets in $ X $. A function $ f $ 
is 
a set of 
ordered pairs such that if $ \left\langle x, y  \right\rangle, \left\langle x,z  \right\rangle\in f $, then $ y=z $. If $ X $ is any set, 
(not necessarily $ X\subseteq \mathsf{dom}(f) $), then 
we write $ f\!\!\upharpoonright\!\! X $ for $ \{ \left\langle x, y  \right\rangle\in f:\ x\in X \} $. 

 \subsubsection{Compactness principle}
 The compactness principle in infinite combinatorics allows us to use finite reasoning to prove infinite results. A little more precisely, suppose that 
 we want to choose suitable values for each of infinitely many variables, each of which can take any one of finitely many possible values. If we must 
 choose these values subject to an 
 infinite set of
 constraints, each of 
 which depends only on finitely many variables, then the compactness principle says that we can set the variables to 
 satisfy all the constraints if we can do so for every finite subset of the constraints.   The first historical application of the compactness principle
  in infinite combinatorics is a short proof of a famous De Bruijn–Erdős theorem \cite{bruijn1951colour} due to Gottschalk 
  (see in \cite{gottschalk1951choice}). The theorem states
  that if  every finite subgraph of an infinite graph $ G $ is $ k $-colourable\footnote{A graph is $ k $-colourable if its vertices 
  can be coloured by at most $ k $ colours in such a way that the endpoints of each edge are assigned different colours.} for 
  a $ k<\omega $, then so is $ G $. We give here a brief demonstration of the compactness principle by repeating 
  Gottschalk's proof.
 
  Recall that a topological 
 space $ (X, \tau) $ is called \emph{compact} 
 if whenever $ \mathcal{F} $ is a set of closed subsets of $ X $ such that every finite subfamily of $ \mathcal{F} $ has 
 nonempty intersection, then so does the whole family $ \mathcal{F} $. Consider the compact space consisting of $ \{ 
 1,\dots, k \} $ equipped with the 
 discrete topology. For $ v\in V(G) $,  let $ (X_v, \tau_v) $ be a copy of this space and let $ (X, \tau):=\prod_{v\in 
 V(G)}(X_v, \tau_v) $. Then $ (X, \tau) $ is compact by Tychonoff's theorem. For $ uv\in E(G) $, let $ C_{uv}\subseteq X $ 
 consists 
 of those $ x $ for which $ x(u)\neq x(v) $. Then $ 
 \mathcal{F}:= \{ C_{e}:\  e\in E(G) \} $ is a  family of closed sets. In order to show that $ G $ is $ k $-colourable, we need 
 to prove that the 
 intersection of all the sets in $ \mathcal{F} $ is nonempty.   The intersection of any finitely many sets in $ 
\mathcal{F} $ is nonempty because the finite subgraphs of $ G $ are properly $ k $-colourable by assumption. But then by 
the compactness of the space $ (X, \tau) $, the intersection of all sets in $ \mathcal{F} $ is also nonempty.

In this example the variables to be chosen are the colours of the vertices of $G$, and the closure of each $C_e$ follows from the fact that the 
constraint it encodes only mentions finitely many (in this case just two) of these variables.

For a more detailed introduction to the compactness principle in combinatorics, with more examples, see \cite{diestel2017graph}.

\subsubsection{Elementary submodels}
Dividing an uncountable structure into smaller pieces in order to use an inductive argument is a common strategy in 
proofs about uncountable structures. In order to make the induction hypothesis applicable, one needs to make sure that the pieces 
inherit certain properties that are relevant with respect to the problem. The desired properties would typically state that 
these pieces are closed under certain operations. Constructing such closed pieces ``by hand'' takes a lot of effort and often requires a long 
description of various operations, tailored to the problem, to close under. 

This is where a tool from Model Theory comes in useful, namely elementary submodels. These are a kind of substructure which are, informally 
speaking, closed under every reasonable operation you could think of. Furthermore, there are standard theorems which allow us to extend arbitrary 
substructures to elementary ones without increasing their cardinality. Thus the theory of elementary submodels provides a simple, efficient way to 
make 
the pieces closed under all possible operations without explicitly describing any of those operations. 

A typical approach is to pick a set that is big enough to be a ``good 
approximation''  of the whole set-theoretic universe\footnote{Unfortunately, for technical reasons we cannot simply use the whole set-theoretic 
universe directly, since it is a proper class.} and contains all the objects that are relevant for the proof, then construct a chain of elementary 
submodels of this set. Then this chain usually partitions the uncountable structure in question into pieces that inherit some important properties of 
the whole structure. For a gentle introduction to elementary submodels and their combinatorial applications we refer the reader to 
   \cite{soukup2011elementary}. Next we recall some important facts about elementary submodels, which will allow us to flesh out the above ideas 
   in more detail.

The transitive closure of a set $ 
X $  is the set whose elements are: the elements of $ X $, the elements of the elements of $ 
  X $ etc. For a cardinal $ \varTheta $,  $ H(\varTheta) $ denotes the set of those sets $ X $ whose transitive closure  is 
 smaller than $ \Theta $.  We write  $ 
 \mathcal{E}\prec H(\varTheta) $ if  $ (\mathcal{E}, \in) $ is an elementary 
 submodel of the first order structure $ (H(\Theta),\in) $, i.e.  $\emptyset\neq  \mathcal{E} \subseteq H(\varTheta) $ and for 
 every 
 formula $ 
 \varphi(v_0,\dots, v_{n-1}) $ in the language of set theory and for every $ x_0,\dots, x_{n-1}\in \mathcal{E} $: $ 
 \varphi(x_0,\dots, x_{n-1}) $ holds true in the structure $ (\mathcal{E}, \in) $ if and only if it holds true in $ (H(\Theta),\in) 
 $.  
 \begin{fact}[{\cite[Claim 3.2]{soukup2011elementary}}]\label{fact: finite subsets}
 If $ \varTheta $ is an infinite cardinal and $ \mathcal{E} \prec H(\varTheta) $, then $ \mathcal{E} $ contains all of its own finite subsets.
 \end{fact}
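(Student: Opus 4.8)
The plan is to show that every finite subset $A \subseteq \mathcal{E}$ satisfies $A \in \mathcal{E}$, proceeding by induction on $\lvert A\rvert$ and exploiting elementarity in both directions together with the fact that the relevant set-theoretic operations are first-order definable over $(H(\varTheta), \in)$.

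First I would handle the base case $A = \emptyset$. The sentence $\exists x\, \forall y\, (y \notin x)$ holds in $H(\varTheta)$, so by elementarity it holds in $\mathcal{E}$; thus some $e \in \mathcal{E}$ satisfies $\forall y\, (y \notin e)$ relative to $\mathcal{E}$. Pulling this formula back up via elementarity, $e$ has no elements in $H(\varTheta)$ either, whence $e = \emptyset$ and $\emptyset \in \mathcal{E}$.

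For the inductive step, suppose every $n$-element subset of $\mathcal{E}$ lies in $\mathcal{E}$, and let $A \subseteq \mathcal{E}$ have $n+1$ elements. Pick $a \in A$ and set $B = A \setminus \{a\}$; by the inductive hypothesis $B \in \mathcal{E}$, and also $a \in \mathcal{E}$. Consider the formula $\varphi(x, y, z)$ expressing $z = y \cup \{x\}$, namely $\forall w\, (w \in z \leftrightarrow (w \in y \lor w = x))$. Since $H(\varTheta) \models \exists z\, \varphi(a, B, z)$, as witnessed by $A$, and the parameters $a, B$ both belong to $\mathcal{E}$, elementarity produces a witness $e \in \mathcal{E}$ with $\mathcal{E} \models \varphi(a, B, e)$. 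Transferring $\varphi(a, B, e)$ back to $H(\varTheta)$ forces $e = B \cup \{a\} = A$, so $A \in \mathcal{E}$.

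The only point requiring care is the systematic two-way use of elementarity: one pushes the existential statement down into $\mathcal{E}$ to obtain a witness $e$, and then pulls the defining formula back up into $H(\varTheta)$ to identify $e$ with the genuine set $A$, rather than merely with $\mathcal{E}$'s internal version of it. This transfer is legitimate precisely because all parameters ($a$ and $B$) lie in $\mathcal{E}$, and it is what guarantees that the witness produced inside $\mathcal{E}$ is literally equal to $A$. Beyond this bookkeeping I expect no genuine obstacle.
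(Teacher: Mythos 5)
Your proof is correct. The paper does not prove this fact itself but cites it from Soukup, and your induction on $\lvert A\rvert$ — using that $\emptyset$ and $B\cup\{a\}$ are definable over $(H(\varTheta),\in)$ from parameters in $\mathcal{E}$, together with the two-way transfer and the transitivity of $H(\varTheta)$ to identify the witness with the actual set — is exactly the standard argument behind the cited claim.
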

 \begin{fact}[{implicit in \cite[Corollary 2.6]{soukup2011elementary}}]\label{fact: elementary chain}
Let $ \aleph_0\leq \lambda<\kappa<\varTheta $ be cardinals where $ \varTheta $ is regular and let $ x\in H(\varTheta) $. 
Then there is a 
$ \subseteq $-increasing 
continuous sequence $ \left\langle \mathcal{E}_\alpha:\ 
 \alpha<\kappa  \right\rangle $ where $\mathcal{E}_0=\emptyset $ and  $ \mathcal{E}_\alpha\prec H(\varTheta) $ for $ 1\leq 
 \alpha<\kappa $ 
  such that $ x\in \mathcal{E}_1 $ 
 and for every $ \alpha<\kappa $:  $ 
 \left|\mathcal{E}_{\alpha}\right| \cup\{ \mathcal{E}_\alpha  \}
 \subseteq\mathcal{E}_{\alpha+1} $  and $ \left|\mathcal{E}_\alpha\right| =\left|\alpha\right|\cdot \lambda$. 
 \end{fact}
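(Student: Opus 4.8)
The plan is to build the sequence by transfinite recursion on $\alpha<\kappa$, relying on two standard model-theoretic facts about the structure $(H(\varTheta),\in)$: the downward Löwenheim–Skolem theorem, which produces for any $A\subseteq H(\varTheta)$ an elementary submodel $\mathcal{E}\prec H(\varTheta)$ with $A\subseteq\mathcal{E}$ and $|\mathcal{E}|=\max(|A|,\aleph_0)$ (the cardinality is pinned down because the language of set theory is countable), and the Tarski elementary chain theorem, which says that a union of a $\subseteq$-increasing chain of elementary submodels is again an elementary submodel. As inductive invariants I would carry that $\mathcal{E}_\beta\prec H(\varTheta)$ for $1\le\beta\le\alpha$, that the part of the sequence built so far is $\subseteq$-increasing and continuous, and that $|\mathcal{E}_\beta|=|\beta|\cdot\lambda$.

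I start with $\mathcal{E}_0=\emptyset$. For the step $0\to 1$ I apply Löwenheim–Skolem to $A_1=\{x\}\cup\lambda$, which has size $\lambda$, obtaining $\mathcal{E}_1\prec H(\varTheta)$ with $x\in\mathcal{E}_1$ and $|\mathcal{E}_1|=\lambda$. At a successor step $\alpha\to\alpha+1$ with $\alpha\ge 1$, I first observe that $\mathcal{E}_\alpha\in H(\varTheta)$: its size $|\alpha|\cdot\lambda$ is below $\kappa<\varTheta$, and regularity of $\varTheta$ guarantees that the union of the $<\varTheta$ many transitive closures of its elements remains below $\varTheta$. I then apply Löwenheim–Skolem to $A_{\alpha+1}=\mathcal{E}_\alpha\cup|\mathcal{E}_\alpha|\cup\{\mathcal{E}_\alpha\}$, still of size $|\alpha|\cdot\lambda=|\alpha+1|\cdot\lambda$, to obtain $\mathcal{E}_{\alpha+1}\prec H(\varTheta)$ containing it; this secures $\mathcal{E}_\alpha\subseteq\mathcal{E}_{\alpha+1}$, $|\mathcal{E}_\alpha|\subseteq\mathcal{E}_{\alpha+1}$, and $\mathcal{E}_\alpha\in\mathcal{E}_{\alpha+1}$ at once. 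At a limit $\alpha$ I am forced into $\mathcal{E}_\alpha=\bigcup_{\beta<\alpha}\mathcal{E}_\beta$ by continuity; since any two of the $\mathcal{E}_\beta$ are elementary submodels of $H(\varTheta)$ with one contained in the other (hence one elementary in the other), the elementary chain theorem gives $\mathcal{E}_\alpha\prec H(\varTheta)$.

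The only part demanding real care — and the reason the clause $\mathcal{E}_\alpha\in\mathcal{E}_{\alpha+1}$ is written into the statement — is the cardinality bookkeeping at limits. The upper bound $|\mathcal{E}_\alpha|\le\sum_{\beta<\alpha}|\beta|\cdot\lambda=|\alpha|\cdot\lambda$ and the lower bound $|\mathcal{E}_\alpha|\ge\lambda$ (from $\mathcal{E}_1\subseteq\mathcal{E}_\alpha$) are routine, but the lower bound $|\mathcal{E}_\alpha|\ge|\alpha|$ is not: taking unions of elementary submodels whose sizes are $|\beta|\cdot\lambda$ can collapse the cardinality, as at $\alpha=\omega_1$ where these sizes only supremize to $\lambda$ rather than to $\aleph_1$. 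This is exactly where having placed each $\mathcal{E}_\beta$ as an \emph{element} of $\mathcal{E}_{\beta+1}$ is used: the sets $\mathcal{E}_\beta$ for $\beta<\alpha$ all lie in $\mathcal{E}_\alpha$ and are pairwise distinct (if $\mathcal{E}_\beta=\mathcal{E}_{\beta'}$ for $\beta<\beta'$ then $\mathcal{E}_{\beta'}\in\mathcal{E}_{\beta'}$, contradicting Foundation), so $|\mathcal{E}_\alpha|\ge|\alpha|$ and therefore $|\mathcal{E}_\alpha|=|\alpha|\cdot\lambda$. With this invariant restored the recursion continues through all $\alpha<\kappa$, yielding the desired sequence.
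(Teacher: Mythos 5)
Your proof is correct. The paper does not prove this Fact itself (it only cites Soukup), and your argument — downward Löwenheim--Skolem at successors with $\mathcal{E}_\alpha\cup|\mathcal{E}_\alpha|\cup\{\mathcal{E}_\alpha\}$ seeded in, Tarski's elementary chain theorem at limits, regularity of $\varTheta$ to keep $\mathcal{E}_\alpha\in H(\varTheta)$, and the membership clause $\mathcal{E}_\beta\in\mathcal{E}_{\beta+1}$ forcing $|\mathcal{E}_\alpha|\ge|\alpha|$ at limits — is exactly the standard argument underlying the cited result.
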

 \begin{fact}[{\cite[Claim 3.7]{soukup2011elementary}}]\label{fact: big set}
  Let $ \varTheta $ be an uncountable cardinal and let $ \mathcal{E}\prec H(\varTheta) $ with $ 
  \left|\mathcal{E}\right|\subseteq 
  \mathcal{E} $. If $ X\in \mathcal{E} $ with $ X\setminus \mathcal{E} \neq \emptyset  $, then $ \left|X\right|> 
  \left|\mathcal{E}\right| $ and $ \left|X\cap \mathcal{E} \right|= \left|\mathcal{E}\right|  $.
  \end{fact}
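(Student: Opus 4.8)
The plan is to read the two assertions off from a single bijection pulled down into $\mathcal{E}$. Write $\mu=|\mathcal{E}|$, so that the hypothesis $|\mathcal{E}|\subseteq\mathcal{E}$ says precisely that every ordinal $\alpha<\mu$ belongs to $\mathcal{E}$. Since $X\in\mathcal{E}$, I would first reflect into $\mathcal{E}$ the ZFC theorem that every set is equinumerous with a cardinal: the statement ``there exist a cardinal $\kappa$ and a bijection $f\colon\kappa\to X$'' holds in $H(\varTheta)$ with parameter $X$, so by $\mathcal{E}\prec H(\varTheta)$ there are witnesses $\kappa\in\mathcal{E}$ and $f\in\mathcal{E}$; because $f$ is a genuine bijection and $\kappa<\varTheta$ is really a cardinal, we get $\kappa=|X|$. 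The engine of the whole argument is then the routine observation that whenever $\alpha\in\mathcal{E}$ and $f\in\mathcal{E}$, elementarity applied to ``there is a unique $y$ with $\langle\alpha,y\rangle\in f$'' yields $f(\alpha)\in\mathcal{E}$.

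To obtain $|X|>|\mathcal{E}|$ I would argue by contradiction. Suppose $\kappa\leq\mu$. Then $\kappa\subseteq\mu\subseteq\mathcal{E}$, so every $\alpha<\kappa$ lies in $\mathcal{E}$, and by the observation above $f(\alpha)\in\mathcal{E}$ for each such $\alpha$. Since $f$ is onto $X$, this forces $X\subseteq\mathcal{E}$, contradicting $X\setminus\mathcal{E}\neq\emptyset$. Hence $|X|=\kappa>\mu=|\mathcal{E}|$.

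For $|X\cap\mathcal{E}|=|\mathcal{E}|$ I would combine a trivial upper bound with the same extraction mechanism for the lower bound. The upper bound is immediate: $X\cap\mathcal{E}\subseteq\mathcal{E}$ gives $|X\cap\mathcal{E}|\leq\mu$. For the lower bound, note that $\mu\subseteq\mathcal{E}$ and $f\in\mathcal{E}$ imply $f(\alpha)\in X\cap\mathcal{E}$ for every $\alpha<\mu$; as $f$ is injective, $\{f(\alpha):\alpha<\mu\}$ is a subset of $X\cap\mathcal{E}$ of cardinality exactly $\mu$, whence $|X\cap\mathcal{E}|\geq\mu$. The two bounds together give the equality.

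The only genuinely delicate point is the very first step: securing a witnessing bijection $f$ \emph{inside} $\mathcal{E}$, and checking that the $\kappa$ it produces is the true cardinality of $X$. This requires knowing that ``$X$ has a cardinality'' reflects into $H(\varTheta)$, which is why $\varTheta$ is taken uncountable; concretely, $X\in H(\varTheta)$ forces $|X|<\varTheta$ and the corresponding bijection again lies in $H(\varTheta)$, and for cardinals below $\varTheta$ being a cardinal is absolute between $H(\varTheta)$ and $V$, so no real obstacle remains. Once $f\in\mathcal{E}$ is in hand, both conclusions are just two applications of the ``pull a witness into the elementary submodel'' principle, and I expect no further difficulty.
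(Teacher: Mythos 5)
The paper offers no proof of this statement---it is quoted as Fact~\ref{fact: big set} directly from Soukup's paper---so there is nothing internal to compare against; your argument is correct and is the standard one behind that citation: reflect ``there is a cardinal $\kappa$ and a bijection $f\colon\kappa\to X$'' into $\mathcal{E}$, then use $|\mathcal{E}|\subseteq\mathcal{E}$ to push ordinals below $|\mathcal{E}|$ (resp.\ below $\kappa$) through $f$ to get the lower bound on $|X\cap\mathcal{E}|$ (resp.\ the contradiction with $X\setminus\mathcal{E}\neq\emptyset$). You also correctly flag and dispose of the only delicate point, namely that the witnessing bijection and its domain lie in $H(\varTheta)$ and that ``being a cardinal'' is absolute there, so the proof is complete as written.
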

 
 \begin{fact}\label{fact: lot of disjoint}
 Let $ \varTheta $ be an uncountable cardinal,  $ \mathcal{E}\prec H(\varTheta) $ with $ 
 \left|\mathcal{E}\right|\subseteq 
 \mathcal{E} $ and let $ \mathcal{X}\in \mathcal{E} $ be a set of countable sets.  If there is a nonempty $ X\in 
 \mathcal{X} $ with 
 $ X \cap \mathcal{E}=\emptyset  $, then  for any maximal set $ \mathcal{Y}\in \mathcal{E} $ of pairwise disjoint elements 
 of $ \mathcal{X} $ 
 we have  
 $  \left|\mathcal{Y}\right|>\left|\mathcal{E}\right|$ and $ \left|\mathcal{Y}\cap 
  \mathcal{E}\right|=\left|\mathcal{E}\right| $. 
 \end{fact}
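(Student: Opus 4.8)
The plan is to reduce the statement to a single application of Fact \ref{fact: big set}. Since $\mathcal{Y}\in\mathcal{E}$, that fact delivers both $|\mathcal{Y}|>|\mathcal{E}|$ and $|\mathcal{Y}\cap\mathcal{E}|=|\mathcal{E}|$ simultaneously, provided its hypothesis $\mathcal{Y}\setminus\mathcal{E}\neq\emptyset$ is met. So the entire task is to exhibit a member of $\mathcal{Y}$ lying outside $\mathcal{E}$.

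First I would record the standard closure tool I use throughout: if $Z\in\mathcal{E}$ is countable, then $Z\subseteq\mathcal{E}$. This follows by the usual enumeration argument: every natural number is definable, so $\omega\subseteq\mathcal{E}$; by elementarity $\mathcal{E}$ contains a surjection $f\colon\omega\to Z$; and then $f(n)\in\mathcal{E}$ for every $n\in\omega$ forces every element of $Z$ into $\mathcal{E}$. In contrapositive form: \emph{a countable set that is not contained in $\mathcal{E}$ cannot itself be a member of $\mathcal{E}$}. This is precisely the device that converts the statement ``has a point outside $\mathcal{E}$'' into ``is outside $\mathcal{E}$,'' which is what I will apply to elements of $\mathcal{Y}$.

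Next I would combine the maximality of $\mathcal{Y}$ with the witness $X$. We may assume $X\neq\emptyset$ (otherwise the hypothesis is vacuous, and indeed the conclusion can fail, as the example $\mathcal{X}=\{\emptyset\}$ shows). Since $X\cap\mathcal{E}=\emptyset$ and $X\neq\emptyset$, the set $X$ has a point outside $\mathcal{E}$, so $X\not\subseteq\mathcal{E}$, and by the closure fact $X\notin\mathcal{E}$. Now there are two cases. If $X\in\mathcal{Y}$, then already $X\in\mathcal{Y}\setminus\mathcal{E}$. Otherwise $X\notin\mathcal{Y}$; since $\mathcal{Y}$ is an inclusion-maximal pairwise-disjoint subfamily of $\mathcal{X}$, the family $\mathcal{Y}$ is pairwise disjoint while $\mathcal{Y}\cup\{X\}$ is not, so $X$ must meet some $Y\in\mathcal{Y}$. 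Choosing $a\in X\cap Y$ gives $a\notin\mathcal{E}$ (because $a\in X$), whence $Y$ is a countable set with a point outside $\mathcal{E}$, so $Y\notin\mathcal{E}$ by the closure fact. In either case $\mathcal{Y}\setminus\mathcal{E}\neq\emptyset$.

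Finally I would invoke Fact \ref{fact: big set} with $\mathcal{Y}$ playing the role of $X$: the hypotheses $\mathcal{Y}\in\mathcal{E}$ and $\mathcal{Y}\setminus\mathcal{E}\neq\emptyset$ hold, so $|\mathcal{Y}|>|\mathcal{E}|$ and $|\mathcal{Y}\cap\mathcal{E}|=|\mathcal{E}|$, as required. The only substantive step is the middle one—locating a member of $\mathcal{Y}$ outside $\mathcal{E}$—and the single point demanding care there is that the maximality of $\mathcal{Y}$ must be read in its absolute, inclusion-maximal sense, so that the given witness $X\in\mathcal{X}$ genuinely cannot be adjoined; everything else is bookkeeping with the countable-closure fact.
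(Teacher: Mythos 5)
Your proof is correct and takes essentially the same route as the paper: both arguments use the countable-closure of $\mathcal{E}$ together with the maximality of $\mathcal{Y}$ and the witness $X$ to establish $\mathcal{Y}\setminus\mathcal{E}\neq\emptyset$, and then conclude by a single application of Fact~\ref{fact: big set}. The paper phrases this as a contradiction (if $\mathcal{Y}\subseteq\mathcal{E}$ then $\bigcup\mathcal{Y}\subseteq\mathcal{E}$, so $\mathcal{Y}\cup\{X\}$ would be a larger disjoint family), while you directly exhibit the element of $\mathcal{Y}$ that $X$ must meet; this is only a cosmetic difference, and your remark about the degenerate case $X=\emptyset$ is a fair (if minor) point of extra care.
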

 \begin{proof}
 If $ 
  \mathcal{Y}\subseteq \mathcal{E} $, then $ \bigcup \mathcal{Y}\subseteq \mathcal{E} $ and $ \mathcal{Y}\cup \{ X \} $ 
  contradicts the 
  maximality of $ \mathcal{Y} $. Therefore $ \mathcal{Y}\setminus \mathcal{E}\neq \emptyset $ and we are done by 
  applying Fact \ref{fact: big 
  set}.
 \end{proof}

\subsection{Matroid theory}\label{subs: matroid}
Matroids were introduced by  Whitney and Nakasawa independently (see \cite{Whitney_1935}) to 
generalize the concept of linear independence in vector 
spaces. In this paper we only work with the most basic concept of infinite matroids, namely finitary matroids. A more 
general concept of infinite matroids was discovered by Higgs \cite{higgs1969matroids} and rediscovered independently by 
Bruhn et al. \cite{bruhn2013axioms}. For a survey about basic facts about infinite matroids we refer to \cite{nathanhabil}.

An ordered pair $ M=(E,\mathcal{C}) $ is a \emph{finitary matroid} (defined by its circuits) if $ \mathcal{C} $ is a set of 
nonempty, pairwise $ 
\subseteq $-incomparable finite subsets of $ E $ satisfying the following \emph{circuit elimination axiom}: If $ C_0, 
C_1\in 
\mathcal{C} $ are distinct and 
$ e\in C_0 \cap C_1 $, then there is a $ C \in \mathcal{C} $ with $ C \subseteq (C_0\cup C_1)\setminus \{ e \} $.
The elements of $ \mathcal{C} $ are called the \emph{circuits} of $ M $. 
\begin{fact}[Strong circuit elimination]\label{fact: strong elim}
If $ M=(E,\mathcal{C}) $ is a finitary matroid, $ C_0, 
C_1\in 
\mathcal{C} $ with $ e\in C_0\setminus C_1 $ and 
$ f\in C_0 \cap C_1 $, then there is a $ C \in \mathcal{C} $ with $e\in  C \subseteq (C_0\cup C_1)\setminus \{ f \} $.
\end{fact}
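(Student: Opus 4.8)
The plan is to prove the statement by strong induction on $|C_0 \cup C_1|$, using only the (weak) circuit elimination axiom from the definition together with the pairwise $\subseteq$-incomparability of the circuits. First I would apply the circuit elimination axiom to the circuits $C_0, C_1$ (which are distinct, since $e \in C_0 \setminus C_1$) and to the common element $f$, obtaining a circuit $C' \subseteq (C_0 \cup C_1) \setminus \{f\}$. If $e \in C'$ we are immediately done, so the whole difficulty lies in the case $e \notin C'$. In that case $C' \neq C_0$ (because $f \in C_0 \setminus C'$), so by incomparability $C' \not\subseteq C_0$; since $C' \subseteq C_0 \cup C_1$, I may fix some $g \in C' \setminus C_0 \subseteq C_1 \setminus C_0$, so that $g \in C_1 \cap C'$ while $g \notin C_0$.

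The key idea is to reach the desired circuit in \emph{two} elimination steps rather than one. For the first step I would apply the induction hypothesis to the pair $(C_1, C')$, keeping $f \in C_1 \setminus C'$ and removing $g \in C_1 \cap C'$. This is legitimate because the relevant union strictly shrinks: $e$ lies in $C_0 \cup C_1$ but in neither $C_1$ nor $C'$, whence $|C_1 \cup C'| < |C_0 \cup C_1|$. The output is a circuit $C''$ with $f \in C'' \subseteq (C_1 \cup C') \setminus \{g\}$; note also that $e \notin C''$, since $e \notin C_1 \cup C'$. For the second step I would apply the induction hypothesis to the pair $(C_0, C'')$, this time keeping $e \in C_0 \setminus C''$ and removing $f \in C_0 \cap C''$. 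Again the union strictly shrinks: $g \in C_0 \cup C_1$ but $g \notin C_0$ and $g \notin C''$, so $|C_0 \cup C''| < |C_0 \cup C_1|$. This yields a circuit $C$ with $e \in C \subseteq (C_0 \cup C'') \setminus \{f\} \subseteq (C_0 \cup C_1) \setminus \{f\}$, which is exactly what is required.

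The main obstacle, and the reason a single application of the weak axiom does not suffice, is that the two demands ``keep $e$'' and ``remove $f$'' pull in opposite directions: the circuit $C'$ produced by weak elimination avoids $f$ but may lose $e$, and the obvious remedy of recombining $C'$ with $C_0$ to recover $e$ simply reintroduces $f \in C_0$. The two-stage route circumvents this precisely because each stage is arranged so that a spare element (namely $e$ in the first stage and $g$ in the second) certifies that the union of the two circuits being combined is strictly smaller than $|C_0 \cup C_1|$, which is exactly what licenses the recursive call. The delicate point to verify is therefore these two strict inequalities; the naive attempt to retain $C_0$ in a single inductive step fails because the element being eliminated still belongs to $C_0$, so the union never decreases. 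No separate base case is needed: the branch $e \in C'$ terminates the argument directly, and every recursive call is made on a strictly smaller union.
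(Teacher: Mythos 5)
Your argument is correct: both recursive calls have strictly smaller unions (witnessed by $e$ and by $g$ respectively), their hypotheses are verified, and the final inclusion $C\subseteq (C_0\cup C_1)\setminus\{f\}$ follows. The paper states this Fact without proof, treating it as standard; what you have written is precisely the classical double-elimination induction on $\left|C_0\cup C_1\right|$ that underlies it, so there is nothing to add.
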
 
Let  $ M=(E, \mathcal{C}) $ be a finitary matroid. Let $ F $ be any set\footnote{In order to simplify some 
later notation, we do not even assume $ F\subseteq E $.}. We 
write $ 
M\setminus F $ for the matroid with  $E(M\setminus F):= E\setminus F $ and $ \mathcal{C}(M\setminus F):=\{ C\in 
\mathcal{C}:\ 
C\cap F=\emptyset \} $. 
\begin{fact}\label{fact: contract circuit}
For every set $ F $, the minimal elements of the set $ \{C\setminus F:\ C\in \mathcal{C} \}\setminus \{ \emptyset \} $ form the set of circuits of a 
matroid.
\end{fact}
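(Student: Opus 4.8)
The plan is to verify that $\mathcal{D}$, the set of minimal elements of $\mathcal{G}:=\{C\setminus F:\ C\in\mathcal{C}\}\setminus\{\emptyset\}$, satisfies the circuit axioms of a finitary matroid on the ground set $E\setminus F$ (this is, of course, the contraction $M/F$). Three of the requirements are immediate. Every member of $\mathcal{D}$ is nonempty because $\emptyset$ was deleted from $\mathcal{G}$; every member is finite since it is a subset of some finite circuit $C\in\mathcal{C}$; and the members are pairwise $\subseteq$-incomparable simply because they are the minimal elements of a family (if $D_0\subsetneq D_1$ with both minimal, then $D_1$ fails to be minimal). So all the content lies in the circuit elimination axiom.

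For elimination, I would take distinct $D_0,D_1\in\mathcal{D}$ together with $e\in D_0\cap D_1$, and lift them back to $M$: choose $C_0,C_1\in\mathcal{C}$ with $C_i\setminus F=D_i$ for $i\in\{0,1\}$. Since $e\in D_0\subseteq E\setminus F$ we have $e\notin F$, while $e\in C_0\cap C_1$. As $D_0$ and $D_1$ are distinct minimal elements they are incomparable, so I may pick $g\in D_0\setminus D_1$; because $g\notin F$, this yields $g\in C_0\setminus C_1$. Now apply strong circuit elimination (Fact \ref{fact: strong elim}) to $C_0,C_1$, keeping $g\in C_0\setminus C_1$ and removing $e\in C_0\cap C_1$, to obtain a circuit $C\in\mathcal{C}$ with $g\in C\subseteq (C_0\cup C_1)\setminus\{e\}$.

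It then remains to push $C$ down to the contraction. Since $g\in C$ and $g\notin F$, the set $C\setminus F$ is nonempty, hence $C\setminus F\in\mathcal{G}$; and using $e\notin F$,
\[
C\setminus F\subseteq\big((C_0\cup C_1)\setminus\{e\}\big)\setminus F=(D_0\cup D_1)\setminus\{e\}.
\]
Because $C\setminus F$ is finite, it contains a member $D\in\mathcal{D}$: take a minimal element of $\mathcal{G}$ among the finitely many elements of $\mathcal{G}$ that are subsets of $C\setminus F$; such a $D$ is automatically minimal in all of $\mathcal{G}$, since any $D'\in\mathcal{G}$ with $D'\subsetneq D$ would also be a subset of $C\setminus F$. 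Then $D\subseteq (D_0\cup D_1)\setminus\{e\}$, which is exactly the circuit elimination conclusion for $\mathcal{D}$.

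The one place needing care is this final step: an ordinary application of circuit elimination would only give $C\subseteq (C_0\cup C_1)\setminus\{e\}$, and such a $C$ could be entirely contained in $F$, so that $C\setminus F=\emptyset$ is not a circuit of the contraction at all. Invoking the \emph{strong} form to force the witness $g\notin F$ into $C$ is precisely what guarantees that the eliminated circuit survives the contraction, and this is the main (indeed the only) obstacle in the argument.
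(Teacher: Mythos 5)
Your proof is correct. The paper states this as a Fact without giving a proof (it is a standard result about contraction of finitary matroids), so there is nothing to compare against; your argument is the standard one. The verifications of nonemptiness, finiteness and pairwise incomparability are all fine, the lift of $D_0,D_1$ to circuits $C_0,C_1$ of $M$ is legitimate, and you correctly observe that $g\in D_0\setminus D_1$ together with $g\notin F$ gives $g\in C_0\setminus C_1$, so that strong circuit elimination (Fact~\ref{fact: strong elim}) applies and forces the eliminated circuit to survive the deletion of $F$. Your identification of this as the one genuine obstacle is exactly right: plain circuit elimination could return a circuit contained in $F$, whose trace in $E\setminus F$ is empty. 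The final descent to a minimal element of $\mathcal{G}$ inside the finite set $C\setminus F$ is also handled correctly, including the check that minimality among subsets of $C\setminus F$ implies minimality in all of $\mathcal{G}$.
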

\noindent We write $ M/F $ for the matroid on $ E\setminus F $ where $ \mathcal{C}(M/F) $ is the set described in Fact 
\ref{fact: contract circuit}. Let us denote $ M\setminus (E\setminus F) $ and $ M/ (E\setminus F)$ by $ M 
| F $ and $ M.F$ respectively.  Matroids of the form $ M/F \setminus G $ are called the 
\emph{minors} of $ M $.
The set $ 
 \mathcal{C}^{*} $ of \emph{cocircuits} of  $ 
 M $ consists of the minimal nonempty subsets $ D $ of $ E $ for which $ \left|D\cap C\right|\neq 1 $ for every $ C\in 
 \mathcal{C} $.
  \begin{fact}\label{fact: circ cocirc meets 2}
 For every $ D\in \mathcal{C}^{*} $ and for every distinct $ e,f \in D $, there is a $ C\in \mathcal{C} $ with $ C\cap D=\{ 
 e,f \} $.
  \end{fact}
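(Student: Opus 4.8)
The plan is to translate the statement into the language of the closure (span) operator of the finitary matroid and to exploit the standard fact that a cocircuit is exactly the complement of a hyperplane. For $X\subseteq E$ write $\operatorname{cl}(X)$ for the set of those $y\in E$ such that $y\in X$ or there is a circuit $C$ with $y\in C\subseteq X\cup\{y\}$; since circuits are finite this is the usual finitary closure, and I will freely use its three basic properties, each a short consequence of the circuit axioms: monotonicity, idempotence ($\operatorname{cl}(\operatorname{cl}(X))=\operatorname{cl}(X)$), and the Mac Lane–Steinitz exchange property. Exchange is in fact immediate from the definition: if $y\in\operatorname{cl}(X\cup\{z\})\setminus\operatorname{cl}(X)$, a witnessing circuit $C$ with $y\in C\subseteq X\cup\{z,y\}$ must contain $z$ (otherwise $C\subseteq X\cup\{y\}$ would place $y$ in $\operatorname{cl}(X)$), and then $z\in C\subseteq (X\cup\{y\})\cup\{z\}$ shows $z\in\operatorname{cl}(X\cup\{y\})$. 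Fix the cocircuit $D$, distinct $e,f\in D$, and set $H:=E\setminus D$.

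First I record two consequences of $D\in\mathcal{C}^{*}$. (A) $H$ is closed: if $x\in\operatorname{cl}(H)$ then some circuit $C$ satisfies $x\in C$ and $C\setminus\{x\}\subseteq H$, whence $C\cap D\subseteq\{x\}$; since $\left|C\cap D\right|\neq 1$ this forces $C\cap D=\emptyset$, i.e. $x\in H$. (B) $H\cup\{e\}$ spans $E$, that is $\operatorname{cl}(H\cup\{e\})=E$. To see (B), put $K:=\operatorname{cl}(H\cup\{e\})$ and suppose $E\setminus K\neq\emptyset$. For every circuit $C$ with $\left|C\cap(E\setminus K)\right|=1$, say $C\cap(E\setminus K)=\{c\}$, we have $C\setminus\{c\}\subseteq K$, so $c\in\operatorname{cl}(C\setminus\{c\})\subseteq\operatorname{cl}(K)=K$ by idempotence, a contradiction; hence $\left|C\cap(E\setminus K)\right|\neq 1$ for every circuit $C$. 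Thus $E\setminus K$ is a nonempty subset of $E\setminus(H\cup\{e\})=D\setminus\{e\}\subsetneq D$ enjoying the defining property of $D$, contradicting the $\subseteq$-minimality of $D$. Therefore $K=E$.

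Finally, combine (A) and (B) through exchange. By (B), $f\in E=\operatorname{cl}(H\cup\{e\})$, while by (A), $f\notin\operatorname{cl}(H)=H$; hence $f\in\operatorname{cl}(H\cup\{e\})\setminus\operatorname{cl}(H)$, and exchange yields $e\in\operatorname{cl}(H\cup\{f\})$. As $e\notin H\cup\{f\}$, there is a circuit $C$ with $e\in C\subseteq H\cup\{e,f\}$. Then $C\cap D=C\setminus H\subseteq\{e,f\}$ and $e\in C\cap D$; moreover $f\in C$, since otherwise $C\subseteq H\cup\{e\}$ with $e\in C$ would give $e\in\operatorname{cl}(H)=H$, contradicting $e\in D$. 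Hence $C\cap D=\{e,f\}$, as required. The substantive step is (B): everything else is bookkeeping with $\operatorname{cl}$, but (B) is precisely where the minimality of the cocircuit is spent, packaged as the assertion that $E\setminus D$ is a hyperplane. The only external inputs are the three closure properties, and among these idempotence is the one whose proof genuinely invokes circuit elimination (the other two follow directly from the definitions), so in a fully self-contained account that is the lemma I would isolate first.
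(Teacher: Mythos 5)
Your proof is correct. The paper states this as a Fact without giving any proof, so there is nothing to compare against; your argument --- showing that $E\setminus D$ is closed and that $(E\setminus D)\cup\{e\}$ spans $E$ (this is where the minimality of $D$ is used), then applying the exchange property to produce a circuit $C\subseteq (E\setminus D)\cup\{e,f\}$ containing both $e$ and $f$ --- is the standard derivation and is complete, with the one genuinely nontrivial ingredient (idempotence of the finitary closure operator, which needs circuit elimination) correctly identified and flagged as the lemma to isolate.
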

 
 \begin{fact}\label{fact: contract cocircuit}
The set $ \mathcal{C}^{*}(M\setminus F) $ consists of the minimal elements of $ \{D\setminus F:\ D\in \mathcal{C}^{*} 
\}\setminus \{ 
\emptyset 
 \} $ while $ \mathcal{C}^{*}(M/F)= \{ D\in 
 \mathcal{C}^{*}:\ 
 D\cap F=\emptyset \}$.
 \end{fact}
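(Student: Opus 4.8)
The plan is to prove the two equalities separately. Each is the cocircuit analogue of a circuit description of a minor (the circuits of $M\setminus F$ are the circuits avoiding $F$, and those of $M/F$ are the $\subseteq$-minimal nonempty traces $C\setminus F$ by Fact~\ref{fact: contract circuit}), and together they are exactly what the duality $(M\setminus F)^{*}=M^{*}/F$, $(M/F)^{*}=M^{*}\setminus F$ would give. Since the dual of a finitary matroid need not be finitary, I will argue directly from the orthogonality definition of $\mathcal C^{*}$.

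For the contraction formula I would first establish the circuit lemma: every $C\in\mathcal C(M)$ with $e\in C\setminus F$ contains a circuit of $M/F$ through $e$. One picks, among all circuits $C'$ with $e\in C'$ and $C'\setminus F\subseteq C\setminus F$, one with $C'\setminus F$ minimal; were $C'\setminus F$ not a circuit of $M/F$, some circuit would have a nonempty trace strictly inside it, and applying strong circuit elimination (Fact~\ref{fact: strong elim}) at a common element would give a circuit through $e$ with a strictly smaller trace, a contradiction. This lemma yields, for $D\subseteq E\setminus F$, the equivalence ``$\lvert D\cap C\rvert\neq1$ for all $C\in\mathcal C(M)$'' $\Leftrightarrow$ ``$\lvert D\cap C'\rvert\neq1$ for all $C'\in\mathcal C(M/F)$'': the forward implication holds because $D\cap(C\setminus F)=D\cap C$ when $D\cap F=\emptyset$, and the converse uses the lemma to turn a circuit of $M$ meeting $D$ once into a circuit of $M/F$ meeting $D$ once. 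Hence the cocircuits of $M/F$ are the minimal nonempty subsets of $E\setminus F$ orthogonal to $\mathcal C(M)$; as any orthogonal proper subset of such a set again lies in $E\setminus F$, minimality over $E\setminus F$ agrees with minimality among all orthogonal sets, so these are precisely the $D\in\mathcal C^{*}$ with $D\cap F=\emptyset$.

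For the deletion formula I would switch to the closure operator, using the standard finitary facts that a nonempty set is orthogonal to $\mathcal C(M)$ iff it is the complement of a flat, that cocircuits are exactly complements of hyperplanes (corank-$1$ flats), that flats of $M\setminus F$ are the traces $H\cap(E\setminus F)$ of flats $H$ of $M$, and that the union of a chain of flats is a flat. The crux is an extension step: given a hyperplane $H_{0}$ of $M\setminus F$, set $\hat H=\operatorname{cl}_{M}(H_{0})$, so $\hat H\cap(E\setminus F)=H_{0}$ and adjoining to $H_{0}$ any $x\in(E\setminus F)\setminus H_{0}$ spans $E\setminus F$; by Zorn choose a flat $H\supseteq\hat H$ maximal with $H\cap(E\setminus F)=H_{0}$. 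One shows $(E\setminus F)\cup H$ spans $M$: otherwise some $f\in F\setminus H$ lies outside $\operatorname{cl}_{M}((E\setminus F)\cup H)$, while maximality forces $\operatorname{cl}_{M}(H\cup f)$ to contain some $y\in(E\setminus F)\setminus H_{0}$, and the exchange property then places $f$ in $\operatorname{cl}_{M}(H\cup y)\subseteq\operatorname{cl}_{M}((E\setminus F)\cup H)$, a contradiction. Consequently adjoining any $x\notin H$ forces $E\setminus F$ into the closure (directly if $x\in E\setminus F$, by maximality if $x\in F$) and hence spans $M$, so $H$ is a hyperplane of $M$ and $D:=E\setminus H\in\mathcal C^{*}$ satisfies $D\setminus F=(E\setminus F)\setminus H_{0}$.

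It remains to assemble the equality. The set $\{D\setminus F:D\in\mathcal C^{*}\}\setminus\{\emptyset\}$ consists of the complements in $E\setminus F$ of the proper traces $H'\cap(E\setminus F)$ of hyperplanes $H'$ of $M$. The extension step puts every cocircuit $(E\setminus F)\setminus H_{0}$ of $M\setminus F$ into this set, and it is minimal there because $H_{0}$ has corank $1$ in $M\setminus F$, so the only flats of $M\setminus F$ above it are $H_{0}$ and $E\setminus F$. Conversely a minimal member is the complement of a maximal proper trace $H'\cap(E\setminus F)$; extending this proper flat of $M\setminus F$ to a hyperplane of $M\setminus F$ (every proper flat of a finitary matroid lies beneath a hyperplane, via a flat maximal among those avoiding a fixed point plus exchange) and feeding that hyperplane through the extension step forces $H'\cap(E\setminus F)$ to already be a hyperplane, so its complement is a cocircuit of $M\setminus F$. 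The step I expect to be the main obstacle is the extension step for the deletion: deletion can raise corank, so $\hat H$ itself need not be a hyperplane of $M$ and one must enlarge it using only elements of $F$, which is exactly where finitariness (for the Zorn argument) and the exchange property do the real work.
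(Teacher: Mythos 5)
The paper offers no proof of this Fact: it is presented as standard background on minors of finitary matroids (normally obtained from the duality $(M\setminus F)^{*}=M^{*}/F$, $(M/F)^{*}=M^{*}\setminus F$ in the general infinite-matroid framework), so there is no argument of the paper's to compare yours against. Your direct proof is correct. The contraction half is the natural argument: your circuit lemma (minimising the trace $C'\setminus F$ over circuits through $e$ and using strong circuit elimination, Fact~\ref{fact: strong elim}, to shrink a non-minimal trace) is exactly what is needed to pass orthogonality between $\mathcal C(M)$ and $\mathcal C(M/F)$, and the observation that minimality over subsets of $E\setminus F$ coincides with minimality over subsets of $E$ finishes it cleanly. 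The deletion half is where the content lies, and you identify the crux correctly: the easy direction (each nonempty $D\setminus F$ is orthogonal to $\mathcal C(M\setminus F)$) does not by itself show that every cocircuit of $M\setminus F$ arises this way, and your hyperplane-extension step --- extend $\operatorname{cl}_M(H_0)$ by Zorn to a flat $H$ with the same trace, then use exchange to show $(E\setminus F)\cup H$ spans and that $H$ is a maximal proper flat of $M$ --- supplies exactly the missing surjectivity. The argument leans on several auxiliary facts about finitary closure (exchange, unions of chains of flats are flats, cocircuits are complements of maximal proper flats, traces of flats are the flats of the restriction) that you state rather than prove, but all of these are genuinely standard and each is easy from the finite-circuit description of closure; with them granted, the proof is complete.
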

\begin{fact}\label{fact: football field}
 For every $e\in E$ and every partition $X\cup Y$ of $E\setminus \left\lbrace e\right\rbrace $, there is either a $C\in \mathcal{C}$ with $e\in 
 C\subseteq 
 X\cup \left\lbrace e \right\rbrace $ or a $D\in \mathcal{C}^{*}$ with $e\in D\subseteq Y\cup \left\lbrace e \right\rbrace $.
  \end{fact}

If $ G=(V,E) $ is a graph, then the edge 
 sets of the cycles of $ G 
 $ are the circuits of a finitary matroid on $ E $ which is 
  called the \emph{cycle matroid} of $ G $. Its cocircuits are the bonds of $ G $. A vector system with the minimal linearly 
  dependent subsets as 
  circuits always forms a finitary 
  matroid.  Such matroids (as 
well as matroids that 
 are isomorphic to such a matroid) are called \emph{linear} or 
 \emph{representable}.  If the vector system 
 in question is over $ \mathbb{F}_2$, then it is a \emph{finitary binary matroid}. The cycle matroid of a graph is a binary 
 matroid.

 \begin{fact}[{\cite[Theorem 2.3]{duchamp1989characterizations}}]\label{fact: binary char}
 A finitary matroid $ M$ is binary if and only if $ \left|C \cap D\right| $ is even for every $ C\in 
  \mathcal{C} $ and $ D\in 
  \mathcal{C}^{*} $.
 \end{fact}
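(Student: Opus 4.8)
The plan is to prove both implications, the forward one being routine and the reverse one carrying all the weight. For the easy direction, suppose $M$ is represented by a family $(v_e)_{e\in E}$ spanning an $\mathbb{F}_2$-vector space $V=\langle v_e:e\in E\rangle$. Over $\mathbb{F}_2$ a circuit $C$ carries a unique dependence, so $\sum_{e\in C}v_e=0$. A cocircuit $D$ is the complement of a hyperplane, hence of the form $\{e:\phi(v_e)=1\}$ for a suitable nonzero functional $\phi\in V^{*}$ (the one vanishing on the span of the hyperplane). Then $|C\cap D|\equiv\sum_{e\in C}\phi(v_e)=\phi\big(\sum_{e\in C}v_e\big)=0\pmod 2$, so $|C\cap D|$ is even.

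For the converse I would work inside the space $\mathbb{F}_2^{(E)}$ of finitely supported functions $E\to\mathbb{F}_2$, identify each finite set with its characteristic vector $\chi$, and set $Z:=\langle \chi_C:C\in\mathcal{C}\rangle$. The goal is to show that the minimal nonzero supports of $Z$ are exactly the circuits of $M$; granting this, $M$ will be realised as binary. The heart of the matter is the following ``cycle-space'' closure: for any two finite disjoint unions of circuits $S,T$, the symmetric difference $S\triangle T$ is again a finite disjoint union of circuits.

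To prove the closure I would pass to the finite restriction $N:=M\!\!\upharpoonright\!\!(S\cup T)$. The even-intersection property is inherited by every minor: using the definition of deletion together with the descriptions of circuits and cocircuits of minors (Facts \ref{fact: contract circuit} and \ref{fact: contract cocircuit}), the relevant circuit–cocircuit intersections are left unchanged in passing to $M\setminus F$ and $M/F$. Since $U_{2,4}$ has a $3$-element set that is simultaneously a circuit and a cocircuit, it violates even intersection; hence $N$, being finite with no $U_{2,4}$-minor, is binary by Tutte's excluded-minor theorem. In a binary matroid the cycle space is an $\mathbb{F}_2$-subspace, so $\chi_{S\triangle T}=\chi_S+\chi_T$ lies in it and $S\triangle T$ is a disjoint union of $N$-circuits, which are circuits of $M$ inside $S\cup T$. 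Iterating, $Z=\{\chi_S:\,S\text{ a finite disjoint union of circuits}\}$, and by incomparability of circuits its minimal nonzero supports are precisely the single circuits.

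It then remains to assemble a global representation. Let $W:=Z^{\perp}\subseteq\mathbb{F}_2^{E}$ be the annihilator of $Z$ under the pairing $\langle\phi,z\rangle=\sum_{e}\phi(e)z(e)$, and let $A$ be the matrix whose rows are a basis of $W$ and whose $e$-th column is $(\phi(e))_{\phi}$. Because the full dual $\mathbb{F}_2^{E}=(\mathbb{F}_2^{(E)})^{*}$ separates any finitely supported vector from the subspace $Z$, the double annihilator satisfies $W^{\perp}\cap\mathbb{F}_2^{(E)}=Z$; thus the null space of $A$ is exactly $Z$, the minimal dependent column sets of $A$ are the minimal supports of $Z$, and these are the circuits of $M$. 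Hence $M=M[A]$ is binary. I expect the main obstacle to be the cycle-space closure: the hypothesis only becomes usable after localising to a finite restriction and invoking finite excluded-minor theory, and one must verify that the minor operations genuinely preserve it. The second delicate point is the infinite-dimensional bookkeeping in the assembly, in particular the identity $W^{\perp}\cap\mathbb{F}_2^{(E)}=Z$, which rests on the algebraic dual of a direct sum being the corresponding product and hence separating points modulo subspaces.
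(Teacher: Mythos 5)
The paper does not prove this fact at all -- it is imported verbatim from Duchamp's Theorem~2.3 -- so there is no in-paper argument to compare against; judged on its own, your blind proof is correct and self-contained. The easy direction is fine: a matroid hyperplane of a representable finitary matroid spans a codimension-one subspace of $V$, so its complement (the cocircuit) is the support of a linear functional, and $|C\cap D|\equiv\phi\bigl(\sum_{e\in C}v_e\bigr)=0\pmod 2$; the only step you pass over silently is that, with the paper's definition of cocircuits as minimal nonempty sets meeting no circuit in exactly one element, cocircuits of a finitary matroid really are exactly the complements of hyperplanes -- standard, but worth a sentence. The converse is the interesting part, and your route works: even circuit--cocircuit intersections are inherited by minors (this checks out against Facts~\ref{fact: contract circuit} and \ref{fact: contract cocircuit}), $U_{2,4}$ violates the property since its $3$-sets are simultaneously circuits and cocircuits, so every finite restriction is binary by Tutte's excluded-minor theorem; this yields closure of the circuit space $Z$ under symmetric difference and hence that the minimal nonzero supports of $Z$ are exactly the circuits, and the double-annihilator identity $W^{\perp}\cap\mathbb{F}_2^{(E)}=Z$ (valid because the algebraic dual of a direct sum is the product and separates points modulo subspaces, using choice to extend bases) turns this into a genuine $\mathbb{F}_2$-representation. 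Two remarks on alternatives: for the finite restriction you could equally cite the classical finite version of the statement itself (Oxley, Theorem~9.1.2) instead of detouring through $U_{2,4}$, though your detour avoids any appearance of circularity; and the more common way to globalize such finite characterizations is a compactness/inverse-limit argument, which is exactly how the paper justifies the related Fact~\ref{fact: bin orthog} -- your explicit cycle-space construction buys a concrete representation rather than a mere existence statement.
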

 
By the nature of linear independence, representable matroids are always finitary. Representability should not 
be
confused with the more general concept of thin sum representability introduced in \cite{borujeni2015thin}, which does not 
require that the matroid be finitary.

 \subsection{Oriented matroids}\label{subs: oriented matroids}
 Oriented binary matroids were defined first by Minty in \cite{minty1966axiomatic}. These are also 
 known as signed matroids (see \cite[p. 458]{oxley1992book}). The more 
 general 
 concept of oriented 
 matroids was discovered  independently by Bland and Las Vergnas \cite{bland1978orientability} and by Folkman and 
 Lawrence \cite{folkman1978oriented}. General concepts of infinite oriented matroids were introduced by 
 Hochst\"attler and  Kaspar 
 \cite{hoch2015orthogax}.  Infinite 
 binary orientable matroids are characterised (under the name of signable matroids) by Bowler and 
 Carmesin in \cite[p. 111]{bowler2018excluded}. Affine representable infinite oriented matroids were investigated recently 
 by 
 Delucchi and Knauer \cite{delucchi2024finitary}.
 
 A \emph{signing} of a set is a $ \pm 
1$-valued function defined on the set. A \emph{signed set} is a function which is a signing of some set.   A 
signed set is \emph{positive} if it is constant $ 1 $ and \emph{negative} if it is 
 constant $ -1 $.  
 The domain of a signed set $ X $ is 
 denoted by $ \underline{X} $.  We define $ X^{+}:= \{ e\in \underline{X}:\ X(e)=1 \} $ and  $ X^{-}:= \{ e\in 
 \underline{X}:\ X(e)=-1 \} $. If $ 
 \mathcal{X} $ is a set of signed sets, then let $ \underline{\mathcal{X}}:=\{ 
 \underline{X}:\ X\in \mathcal{X} \} $.
 A 
 \emph{signature} of 
 a set $ \mathcal{F} $ of nonempty sets is a set that consists of exactly two signings of each  $ F \in \mathcal{F} $ and these 
 two 
 are opposite. An \emph{orientation} $ \vec{M} $ of a finitary matroid $ M=(E, \mathcal{C}) $ is a pair $ (E,\mathcal{O}) $ 
 where $ \mathcal{O} $ is a signature of  $ \mathcal{C} $ that satisfies the following \emph{oriented circuit elimination 
 axiom}: For every $ C_0, C_1 \in \mathcal{O} $ with $ C_1 \neq \pm C_0 $ and $ e\in C_0^{+}\cap C_1^{-} $,  
  there is a $ C\in \mathcal{C} $ 
  with $ C^{-}\subseteq (C_0^{-}\cup C_1^{-})\setminus \{ e \} $ and $ C^{+}\subseteq (C_0^{+}\cup C_1^{+})\setminus 
  \{ e \} $.  Every oriented matroid gives rise to a unique signature $ \mathcal{O}^{*} $ of its cocircuits satisfying
  
  \begin{equation}\label{ax: orthog}
  (\forall C\in \mathcal{O})(\forall D \in \mathcal{O}^{*}) \left[\underline{C} \cap 
    \underline{D}\neq \emptyset  \Longrightarrow C \!\!\upharpoonright\!\! (\underline{C} \cap 
  \underline{D})\neq \pm D \!\!\upharpoonright\!\! (\underline{C}\cap \underline{D})\right] . 
  \end{equation}
 
 Conversely, if $ \mathcal{O}$ and $ \mathcal{O}^{*} $ are signatures 
of the circuits and of the cocircuits respectively and they satisfy \emph{orthogonality axiom}  (\ref{ax: orthog}), then $ 
\mathcal{O} 
$ satisfies the oriented circuit elimination 
axiom. The orientations of finitary matroids are called oriented finitary matroids.

\begin{fact}\label{fact: bin orthog}
A finitary oriented matroid $\vec{M}=(E,\mathcal{O}) $  is binary if and only if  $ \sum_{e\in 
\underline{C}\cap 
\underline{D}}C(e)D(e)=0 $ for every $ C\in \mathcal{O} $ and $ D \in \mathcal{O}^{*} $.
\end{fact}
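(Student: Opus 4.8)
The plan is to read the identity against the unsigned parity criterion of Fact \ref{fact: binary char}. Fix $C\in\mathcal{O}$, $D\in\mathcal{O}^{*}$, put $I:=\underline{C}\cap\underline{D}$, and note each summand $C(e)D(e)$ lies in $\{+1,-1\}$, so $\sum_{e\in I}C(e)D(e)=p-q$, where $p$ and $q$ count the elements of $I$ on which $C$ and $D$ agree, respectively disagree, in sign, and $\left|I\right|=p+q$. For the direction ``$\Leftarrow$'', assume the displayed sum vanishes for all such pairs; then $p=q$, hence $\left|\underline{C}\cap\underline{D}\right|=2p$ is even. Since every $C\in\mathcal{C}$ and $D\in\mathcal{C}^{*}$ can be signed inside $\mathcal{O}$ and $\mathcal{O}^{*}$, this says $\left|C\cap D\right|$ is even for all circuits $C$ and cocircuits $D$, and Fact \ref{fact: binary char} yields that $M$ is binary.

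For the direction ``$\Rightarrow$'', assume $M$ is binary, so $\left|I\right|$ is even by Fact \ref{fact: binary char}; I must show $p=q$. The base mechanism is the case $\left|I\right|=2$, say $I=\{e,f\}$: then $C\!\!\upharpoonright\!\! I$ and $D\!\!\upharpoonright\!\! I$ are signings of a two-element set, and the orthogonality axiom (\ref{ax: orthog}) forbids $C\!\!\upharpoonright\!\! I=\pm D\!\!\upharpoonright\!\! I$; the two remaining signings each differ from $D\!\!\upharpoonright\!\! I$ in exactly one coordinate, so $C$ and $D$ agree on precisely one of $e,f$, giving $C(e)D(e)+C(f)D(f)=0$. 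To bring the general case into reach, I would pass to a finite minor, since only $C$ and the finite set $I$ enter the sum: restrict to $N:=M\!\!\upharpoonright\!\!\underline{C}$, a finite binary oriented matroid in which $C$ remains a signed circuit. By Fact \ref{fact: contract cocircuit} the trace $I=\underline{D}\cap\underline{C}$ is a cocycle of $N$, hence, as $N$ is binary, a disjoint union $I=\bigsqcup_{j}\underline{D_{j}}$ of cocircuits of $N$; each $D_{j}$ inherits a signing agreeing with $D$ on $\underline{D_{j}}$ up to a global sign (being orthogonal to all signed circuits of $N$ pins it down up to $\pm$), so that $\sum_{e\in I}C(e)D(e)=\sum_{j}\pm\sum_{e\in\underline{D_{j}}}C(e)D_{j}(e)$. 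It therefore suffices to prove the identity for $N$, the circuit $C$, and a single cocircuit $D_{j}$.

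The main obstacle is exactly this finite orthogonality, i.e.\ upgrading the weak conclusion of (\ref{ax: orthog})---which only rules out constant agreement/disagreement patterns---to the balance $p=q$. My intended route is induction on $\left|\underline{C}\cap\underline{D_{j}}\right|$: choose distinct $e,f$ in this intersection, use Fact \ref{fact: circ cocirc meets 2} to obtain a circuit meeting $D_{j}$ in exactly $\{e,f\}$, orient it, and eliminate against $C$ via the oriented circuit elimination axiom to produce a signed circuit whose trace on $\underline{D_{j}}$ is smaller and, by binarity, again even. I expect the genuinely hard step to be controlling the support and the signs of the eliminated circuit on $\underline{D_{j}}$: the elimination axiom bounds supports only from above, so it does not by itself determine which elements of $I$ survive. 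Binarity is what should tame this, because the even-intersection constraint forces the trace to drop by a controlled parity, while the signs on $\underline{D_{j}}$ of the auxiliary circuit are fixed since it meets $D_{j}$ only in $\{e,f\}$; combining the $\left|I\right|=2$ computation with the inductive hypothesis through the relation above would then deliver $p=q$. Making the ``controlled drop'' precise---rather than merely bounding it---is the crux on which the whole argument turns.
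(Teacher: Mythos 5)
Your ``$\Leftarrow$'' direction is fine, and reducing the ``$\Rightarrow$'' direction to the finite restriction $N:=\vec{M}\!\!\upharpoonright\!\!\underline{C}$ is legitimate (this is essentially the compactness step). But the finite case is exactly where your argument breaks, in two places. First, the decomposition step is false as stated: if you write $I=\bigsqcup_j \underline{D_j}$ with $D_j\in\mathcal{C}^{*}(N)$, the signing that $D_j$ carries in $\vec{N}$ need \emph{not} agree with $D\!\!\upharpoonright\!\!\underline{D_j}$ up to a global sign --- orthogonality to the signed circuits of $\vec{N}$ pins $D_j$ down up to $\pm$, but the object it pins down is not the restriction of $D$. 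Concretely, let $C$ be a $4$-cycle $e_1e_2e_3e_4$ oriented so that $C=(+,+,+,+)$ and let $D$ be a bond meeting it in all four edges, so $D\!\!\upharpoonright\!\!\underline{C}=(+,-,+,-)$ in cyclic order. Then $N\cong U_{3,4}$, every $2$-subset is a cocircuit, and for the decomposition $\{e_1,e_3\}\sqcup\{e_2,e_4\}$ the signed cocircuit of $\vec{N}$ on $\{e_1,e_3\}$ is $\pm(+,-)$ while $D\!\!\upharpoonright\!\!\{e_1,e_3\}=(+,+)$; accordingly $\sum_{e\in\{e_1,e_3\}}C(e)D(e)=2\neq 0$, so the sum does not vanish piecewise and your identity $\sum_{e\in I}C(e)D(e)=\sum_j\pm\sum_{e\in\underline{D_j}}C(e)D_j(e)$ fails for this choice. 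A ``good'' decomposition exists here, but you give no mechanism for producing one, and doing so is essentially equivalent to the statement you are proving. Second, and decisively, you leave the inductive step open yourself: you write that making the ``controlled drop'' of the eliminated circuit's trace precise ``is the crux on which the whole argument turns.'' That crux is the proof; the oriented elimination axiom only bounds supports from above, and nothing in your sketch extracts the exact balance $p=q$ from it.

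For comparison, the paper does not prove this fact from first principles at all: it cites the finite case from Oxley (Corollary 13.4.6), whose standard proof goes through the theorem that orientable binary matroids are regular, so that signed circuits and cocircuits become genuinely orthogonal vectors over $\mathbb{Q}$ via a totally unimodular representation, and then notes that the finitary case follows by compactness. If you want a self-contained argument, the linear-algebraic route is the one to reproduce; the combinatorial induction you propose is not known to the present reviewer to close, and as written it does not.
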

\noindent The fact above is well-known in the finite case \cite[Corollary 13.4.6]{oxley1992book} and remains true by 
compactness for 
finitary matroids.

We refer to the positive and negative (co)circuits of $ \vec{M} $ as its \emph{directed (co)circuits}.
\begin{fact}[{Farkas lemma \cite[Corollary 3.4.6]{bjorner1999oriented}}]\label{fact: Farkas lemma}
If $ \vec{M}=(E,\mathcal{O}) $ is a finite oriented matroid, then for every $ e\in E$ there is either a positive $ C\in 
\mathcal{O} $ through $ e $ or a positive $ D\in \mathcal{O}^{*} $ through $ e $ but not both.
\end{fact}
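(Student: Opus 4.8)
The plan is to prove the two halves separately: the ``not both'' part is an immediate consequence of the orthogonality axiom~(\ref{ax: orthog}), while the ``at least one'' part carries the genuine duality content, which I would establish by induction on $|E|$. Throughout I would use that an oriented minor $\vec{M}\setminus f$ (resp.\ $\vec{M}/f$) inherits its circuit and cocircuit signatures from $\vec{M}$ by restriction, so that the underlying supports are governed by Facts~\ref{fact: contract circuit} and~\ref{fact: contract cocircuit}.

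For ``not both'', suppose towards a contradiction that there are a positive $C\in\mathcal{O}$ and a positive $D\in\mathcal{O}^{*}$ both through $e$. Then $e\in\underline{C}\cap\underline{D}$, so this intersection is nonempty, and since both $C$ and $D$ are constant $1$ on their domains, $C\!\!\upharpoonright\!\!(\underline{C}\cap\underline{D})$ and $D\!\!\upharpoonright\!\!(\underline{C}\cap\underline{D})$ are both the constant $1$ function on $\underline{C}\cap\underline{D}$, hence equal. This directly contradicts~(\ref{ax: orthog}).

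For ``at least one'', I would induct on $|E|$. In the base case $E=\{e\}$, the element $e$ is either a loop, so that $\{e\}$ carries a positive circuit, or a coloop, so that $\{e\}$ carries a positive cocircuit. For the inductive step, fix any $f\in E\setminus\{e\}$ and apply the induction hypothesis both to $\vec{M}\setminus f$ and to $\vec{M}/f$, in each of which $e$ survives. If $\vec{M}\setminus f$ yields a positive circuit through $e$, it is already such a circuit of $\vec{M}$ and we are done; otherwise it yields a positive cocircuit $D'$ through $e$, which is the $f$-deletion of a cocircuit $D$ of $\vec{M}$ that is positive on $E\setminus\{f\}$. Symmetrically, if $\vec{M}/f$ yields a positive cocircuit through $e$ we are done (it avoids $f$ and lifts directly); otherwise it yields a positive circuit $C'$ through $e$ arising from a circuit $C$ of $\vec{M}$ positive on $E\setminus\{f\}$.

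The only branch not resolved immediately is when $D(f)=-1$ and $C(f)=-1$ simultaneously. But then $C$ and $D$ agree on all of $\underline{C}\cap\underline{D}$: both take value $+1$ at $e$, both take value $-1$ at $f$, and both take value $+1$ at every other common element. Hence $C\!\!\upharpoonright\!\!(\underline{C}\cap\underline{D})=D\!\!\upharpoonright\!\!(\underline{C}\cap\underline{D})$ with $e\in\underline{C}\cap\underline{D}\neq\emptyset$, again contradicting~(\ref{ax: orthog}). Thus this branch cannot occur, and in every remaining case $e$ lies in a positive circuit or a positive cocircuit of $\vec{M}$, closing the induction. I expect the main point requiring care to be the sign-bookkeeping of the oriented minors: one must check that a positive circuit (resp.\ cocircuit) of $\vec{M}/f$ (resp.\ $\vec{M}\setminus f$) lifts to a circuit (resp.\ cocircuit) of $\vec{M}$ whose only possibly-negative coordinate is $f$, after which the orthogonality axiom closes every branch uniformly.
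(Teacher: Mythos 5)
Your proof is correct. Note that the paper does not actually prove this statement: it is quoted as a Fact with a citation to Björner et al., so there is no in-paper argument to compare against, and your write-up would make the paper self-contained on this point. Both halves are sound. The ``not both'' direction is exactly the one-line consequence of the orthogonality axiom (\ref{ax: orthog}) that you describe. For ``at least one'', the deletion/contraction induction works: by Fact \ref{fact: oriented minor} a positive cocircuit of $\vec{M}\setminus f$ through $e$ lifts to a cocircuit $D$ of $\vec{M}$ that is positive except possibly at $f$, and a positive circuit of $\vec{M}/f$ through $e$ lifts to a circuit $C$ of $\vec{M}$ that is positive except possibly at $f$; in the four easy sub-branches ($f\notin\underline{C}$, $f\notin\underline{D}$, $C(f)=+1$, $D(f)=+1$) one of these lifts is already a positive (co)circuit of $\vec{M}$ through $e$, and in the only remaining branch $C(f)=D(f)=-1$ the signed sets $C$ and $D$ agree on $\underline{C}\cap\underline{D}\supseteq\{e,f\}$, contradicting (\ref{ax: orthog}). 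This is essentially the classical Minty-style three-painting induction that underlies the cited Corollary 3.4.6 of Björner et al. The one point I would spell out more explicitly is the exhaustiveness of the case split, i.e.\ that the four easy sub-branches really do terminate immediately, so that the orthogonality clash is genuinely the only case left.
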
 
Minors of finitary oriented matroids are defined similarly to the unoriented ones, except they inherit the signs as well. 

\begin{fact}[{\cite[Lemma 3.6]{hoch2015orthogax}}]\label{fact: oriented minor}
Let $ \vec{M}=(E, \mathcal{O}) $ be a finitary oriented matroid and let $ F $ be a set. Then 
\begin{align*}
\mathcal{O}(\vec{M}\setminus F)&=\{ C\in \mathcal{O}:\ \underline{C}\cap F=\emptyset \}, \\
\mathcal{O}(\vec{M}/F)&=\{ C \!\!\upharpoonright\!\! (E\setminus F):\ C\in \mathcal{O},\  
\underline{C}\setminus 
F \in 
\mathcal{C}(M/F)\},                 \\
\mathcal{O}^{*}(\vec{M}\setminus F)&=\{ D \!\!\upharpoonright\!\! (E\setminus F):\ D\in \mathcal{O}^{*},\ 
\underline{D}\setminus F \in 
\mathcal{C}^{*}(M/F)\},      \\
\mathcal{O}^{*}(\vec{M}/ F)&=\{ D\in \mathcal{O}^{*}:\ \underline{D}\cap F=\emptyset \}.
\end{align*}
\end{fact}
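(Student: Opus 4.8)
The plan is to verify that each of the four displayed sets is a genuine signature of the corresponding underlying (co)circuits of the minor, and that for each minor the resulting circuit signature and cocircuit signature are mutually orthogonal. By the characterisation recorded just before Fact~\ref{fact: bin orthog}, such a pair automatically satisfies the oriented circuit elimination axiom and hence constitutes an orientation; since the cocircuit signature of an oriented matroid is uniquely determined by its circuit signature, this orientation must be the oriented minor, and the formulas follow.

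The first line (deletion of circuits) and the last line (contraction of cocircuits) are the routine cases. Here the underlying (co)circuits of the minor are exactly the (co)circuits of $M$ that avoid $F$ --- by the definition of $M\setminus F$ and by Fact~\ref{fact: contract cocircuit} --- and the displayed sets merely retain the signings already present in $\vec{M}$, so they are signatures with no further argument. Orthogonality for the minor is then literally a restriction of the orthogonality (\ref{ax: orthog}) of $\vec{M}$: for such a circuit $C$ and cocircuit $D$ of the minor the common support $\underline{C}\cap\underline{D}$ is unchanged when they are viewed in $\vec{M}$, so $C\!\!\upharpoonright\!\!(\underline{C}\cap\underline{D})\neq\pm D\!\!\upharpoonright\!\!(\underline{C}\cap\underline{D})$ transfers verbatim.

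The real content is the well-definedness of the restricted signings in the second line (contraction of circuits) and the third line (deletion of cocircuits); I would carry out the contraction case, the other being symmetric with the roles of circuits and cocircuits (and of Fact~\ref{fact: circ cocirc meets 2} and its dual) interchanged. A circuit of $M/F$ is a minimal nonempty set $\underline{C}\setminus F$ (Fact~\ref{fact: contract circuit}), and its candidate signings are the restrictions $C\!\!\upharpoonright\!\!(E\setminus F)$. The crux is the claim: if $C_0,C_1\in\mathcal{O}$ have $\underline{C_0}\setminus F=\underline{C_1}\setminus F=:C''$ with $C''$ minimal, then $C_0\!\!\upharpoonright\!\!(E\setminus F)=\pm C_1\!\!\upharpoonright\!\!(E\setminus F)$, so that each circuit of $M/F$ receives exactly two opposite signings. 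To prove it, fix distinct $e,g\in C''$. By the dual of Fact~\ref{fact: circ cocirc meets 2}, applied in the finitary matroid $M/F$ (concretely, as the fundamental cocircuit of $e$ with respect to a base $B\supseteq C''\setminus\{g\}$), there is a cocircuit $D$ of $M/F$ with $C''\cap D=\{e,g\}$. By Fact~\ref{fact: contract cocircuit} this $D$ is a cocircuit of $M$ with $\underline{D}\cap F=\emptyset$, whence $\underline{C_i}\cap\underline{D}=C''\cap D=\{e,g\}$ for $i=0,1$. Orthogonality~(\ref{ax: orthog}) of $\vec{M}$ applied to $C_i$ and the signed cocircuit $D$ forbids $(C_i(e),C_i(g))$ from being equal or opposite to $(D(e),D(g))$, which forces the relative sign $C_i(e)C_i(g)=-D(e)D(g)$, a value not depending on $i$. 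Letting $e,g$ range over $C''$ shows $C_0$ and $C_1$ agree on $E\setminus F$ up to a single global flip, which is exactly well-definedness.

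With well-definedness established, the contraction case concludes like the routine ones: mutual orthogonality of $\mathcal{O}(\vec{M}/F)$ and $\mathcal{O}^{*}(\vec{M}/F)$ reduces, after lifting a circuit and a cocircuit of the minor to representatives in $\vec{M}$, to orthogonality of $\vec{M}$ on the common support, which is untouched by contracting $F$; the converse characterisation then identifies the pair as the oriented minor. I expect the well-definedness claim to be the only genuine obstacle --- everything else is bookkeeping with supports. (One should also read the third line with $\underline{D}\setminus F\in\mathcal{C}^{*}(M\setminus F)$, i.e.\ the minimal restrictions of Fact~\ref{fact: contract cocircuit}; with $\mathcal{C}^{*}(M/F)$ it would not describe the cocircuits of a deletion.)
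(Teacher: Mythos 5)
The paper does not prove this statement at all: it is imported verbatim as a Fact with a citation to Hochst\"attler and Kaspar \cite{hoch2015orthogax}, so there is no in-paper argument to compare yours against. Judged on its own, your proof is correct and self-contained. The reduction to the converse direction of the orthogonality characterisation (a mutually orthogonal pair of signatures of the circuits and cocircuits is an orientation, and the cocircuit signature is determined by the circuit signature) is the right frame, and the one genuinely non-trivial step --- that all $C\in\mathcal{O}$ with $\underline{C}\setminus F$ equal to a fixed circuit of $M/F$ restrict to the same signed set up to a global sign flip --- is handled correctly: the fundamental-cocircuit construction produces, for distinct $e,g$ in that circuit of $M/F$, a cocircuit $D$ of $M$ disjoint from $F$ with $\underline{C_i}\cap\underline{D}=\{e,g\}$, and orthogonality on a two-element set pins down the relative sign $C_i(e)C_i(g)=-D(e)D(g)$ independently of $i$; the dual argument covers the deletion cocircuits. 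The bookkeeping that the common supports $\underline{C}\cap\underline{D}$ are unchanged when one of the two avoids $F$ is also right. Two small remarks: you are correct that the third line as printed contains a typo and should read $\underline{D}\setminus F\in\mathcal{C}^{*}(M\setminus F)$ (with $\mathcal{C}^{*}(M/F)$ the condition would collapse to $\underline{D}\cap F=\emptyset$ and miss the properly restricted cocircuits); and your appeal to the existence of bases and fundamental cocircuits in finitary matroids is a standard Zorn's-lemma fact of the same kind the paper itself uses in the proof of Lemma \ref{lem: Farkas property}, so it is a legitimate ingredient even though the paper never states the dual of Fact \ref{fact: circ cocirc meets 2} explicitly.
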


A finitary matroid is called \emph{regular} if it is representable over every field. Finitary binary orientable 
matroids are 
exactly the finitary regular matroids. By replacing representability with the already mentioned thin sum representability, this 
connection extends 
even beyond finitary matroids (see \cite[Subsection 5.1]{bowler2018excluded}).

\section{Finite matching extendable matroids}\label{sec: unoriented}
 Using the terminology of Komjáth et al. from
 \cite{komjath1988compactness}, a
 \emph{matching} in a matroid $ M=(E, \mathcal{C}) $ is a set $ 
 \mathcal{A} $ of pairwise disjoint elements of $ \mathcal{C} $. A \emph{perfect matching} is a matching $ \mathcal{A} $ 
 with $ 
 \bigcup \mathcal{A}=E $, i. e. a partition of the ground set of $ M $ into circuits. A matroid is \emph{matchable} if it 
 admits 
 a perfect matching. A matroid $ M $ is called  \emph{finite 
 matching 
 extendable} 
 (abbreviated as f.m.e.) if for every finite matching $ \mathcal{A} $ and for every edge 
 $ e \in E\setminus \bigcup \mathcal{A} $ there is a circuit $ C $ with $ e\in C \subseteq E\setminus \bigcup \mathcal{A} $. 
 
\fme 

\begin{proof}
Let $ \varTheta $ be an arbitrary but fixed regular uncountable cardinal. It is sufficient to prove the theorem only for matroids 
in $ H(\varTheta) $.

\begin{observation}\label{obs: down OK}
If $ \mathcal{E}\prec H(\varTheta) $, then for every finitary f.m.e. matroid  $ M\in \mathcal{E} $, the matroid $ 
M| \mathcal{E} 
$ is also f.m.e. 
\end{observation}
\begin{proof}
Let  $ \mathcal{A} $ be a finite matching in $ M| \mathcal{E} $, and an let $ e\in E\cap \mathcal{E} 
\setminus \bigcup 
\mathcal{A} $. Then for each $ C\in \mathcal{A} $ we have $ C\in \mathcal{E} $, moreover, $ \mathcal{A} \in 
\mathcal{E} $ (see Fact \ref{fact: finite subsets}).   Thus $ M, 
\mathcal{A},e \in \mathcal{E} $ and hence there is a $ C\in \mathcal{C}\cap \mathcal{E} $ with $ e \in C \subseteq 
E\setminus \bigcup \mathcal{A} 
$. Finally, $ C\subseteq \mathcal{E} $ because $ C $ is finite (see Fact \ref{fact: big set}), therefore $ C \in 
\mathcal{C}(M| 
\mathcal{E}) $.
\end{proof}

\begin{lemma}\label{lem: key lemma}
If $ \mathcal{E}\prec H(\varTheta) $ with $ \left|\mathcal{E}\right|\subseteq 
\mathcal{E} $, then for every  finitary f.m.e. matroid $ M\in \mathcal{E} $,  the matroid $ M\setminus 
\mathcal{E}$ is also f.m.e.
\end{lemma}
\begin{proof}
Let a finite matching $ \mathcal{A} $ in $ M\setminus \mathcal{E} $ and an edge $ e\in E\setminus (\bigcup 
\mathcal{A}\cup\mathcal{E})  $ be 
given. By applying the assumption that $ M $ is f.m.e., we pick a $ C_0\in \mathcal{C} $ with $ e\in C_0 \subseteq 
E\setminus \bigcup 
\mathcal{A} $ for 
which
$F:=C_0\cap\mathcal{E} $ is minimal. If $ F=\emptyset $, then $ C_0 \subseteq E\setminus \mathcal{E} $ and hence $ C_0 
\in 
\mathcal{C}(M\setminus \mathcal{E}) $, 
thus we are done. Suppose for a contradiction that $F\neq\emptyset $. Let $ \mathcal{X} $ consist of the finite sets that 
extend $ 
F $ to an $ M $-circuit, formally $ \mathcal{X}:=\{X\subseteq E\setminus F:\ F\cup X\in \mathcal{C}  \} $.  Since $ 
C_0\setminus \mathcal{E}\in 
\mathcal{X}$ is clearly disjoint from $ \mathcal{E} $, we conclude by applying Fact 
\ref{fact: lot of disjoint} that there is a set $ \mathcal{Y} $ of pairwise disjoint elements of $ \mathcal{X} $ with $ 
\left|\mathcal{Y}\right|>\left|\mathcal{E}\right| $. Since 
$ \left|\mathcal{E} \cup \bigcup\mathcal{A}\cup \{ e \}\right|=\left|\mathcal{E}\right| $,  it follows that there is an $ X\in 
\mathcal{Y} $ that is 
disjoint 
from $ 
\mathcal{E} \cup \bigcup\mathcal{A}\cup \{ e \} $. Let $ C_1:=F \cup X $. Since $ F\neq \emptyset $ by the indirect 
assumption, we can pick $ f 
\in C_0 \cap C_1 $. By strong circuit elimination (Fact \ref{fact: strong elim}), it follows that there is a  $ C\in \mathcal{C} $ 
with $ e\in C \subseteq 
(C_0 
\cup C_1)\setminus \{ f \} $. But then $ e\in 
C \subseteq E\setminus \bigcup \mathcal{A} $ with $ \left|C\cap \mathcal{E}\right|\leq \left|C_0 \cap \mathcal{E}\right|-1 $ 
(because $C\cap 
\mathcal{E}\subseteq C_0 \cap \mathcal{E} \setminus \{ f \} $) contradicting the 
minimality of $ C_0 $.
\end{proof}

Let $ M=(E, \mathcal{C})\in H(\varTheta) $ be a f.m.e. matroid. We apply transfinite induction on $ \kappa:=\left|E\right| $.
If $ \kappa 
\leq \aleph_0 $,  the statement follows by a 
straightforward recursion. Indeed,  we fix a $\kappa$-type well-order of $ E $  and in step $ n $ apply finite matching 
extendability to 
pick a circuit $ C_n $ through 
the least element of $ E\setminus \bigcup_{i<n}C_i $ that is disjoint from $ C_0,\dots, C_{n-1} $.  

Suppose $ \kappa >\aleph_0 $. Let $ \left\langle \mathcal{E}_\alpha:\ \alpha<\kappa  \right\rangle $ be as in Fact \ref{fact: 
elementary chain} with 
$ 
x:=M $ and $ \lambda:=\aleph_0 $.
By Lemma \ref{lem: key lemma} we know that $ M\setminus \mathcal{E}_\alpha $ is f.m.e. for every $ \alpha<\kappa $. 
Since $ 
M,\mathcal{E}_\alpha \in \mathcal{E}_{\alpha+1} $, we have $ M\setminus \mathcal{E}_\alpha \in 
\mathcal{E}_{\alpha+1} 
$. But then applying Observation \ref{obs: down OK}  with $  M\setminus \mathcal{E}_\alpha $ and $ 
\mathcal{E}_{\alpha+1} $ for $ \alpha<\kappa $, we 
conclude that  $  (M\setminus \mathcal{E}_\alpha) 
| 
\mathcal{E}_{\alpha+1}=M| 
(\mathcal{E}_{\alpha+1}\setminus 
\mathcal{E}_{\alpha}) 
$  is f.m.e. for every $ \alpha<\kappa $. By the induction hypothesis we can pick a perfect matching $ \mathcal{A}_\alpha $ 
of $ 
M | (\mathcal{E}_{\alpha+1}\setminus 
\mathcal{E}_{\alpha}) $ for each $ \alpha<\kappa $. Clearly, $\bigcup_{\alpha<\kappa}\mathcal{A}_\alpha $ is a matching 
in $ 
M $ and it is perfect 
because  $ E= \bigcup_{\alpha<\kappa} E\cap (\mathcal{E}_{\alpha+1}\setminus \mathcal{E}_\alpha) $ is a partition.
\end{proof}

\section{A Laviolette theorem for finitary matroids}\label{sec: Lavio}
Let $ M=(E, \mathcal{C}) $ be a finitary matroid and let $ \lambda $ be an infinite cardinal. An $ F\subseteq E $ is $ 
\lambda 
$\emph{-cocircuit faithful} with respect to $ M $ if  for every $ 
D\in \mathcal{C}^{*}(M| F) $ with $ \left|D\right|<\lambda $ we have $ D\in 
\mathcal{C}^{*}(M) $. A $ \lambda $\emph{-cocircuit faithful partition} of $ M $ is a partition $ \{ F_i:\ i\in I \} $ of $ E $  
such that
\begin{enumerate}[label=(\roman*)]
\item\label{item: small} $ \left|F_i\right|\leq \lambda $ for every $ i\in I $;
\item\label{item: faithful} $ F_i $ is $ \lambda $-cocircuit faithful for each $ i\in I $;
\item\label{item: is somewhere} For every $ D\in \mathcal{C}^{*} $ with $ \left|D\right|\leq \lambda $, there exists an $ i\in 
I  $ with $ D 
\subseteq 
F_i $.
\end{enumerate}
\Laviolette

\begin{proof}
Let $ \varTheta $ be an arbitrary but fixed regular uncountable cardinal. It is sufficient to prove the theorem only for matroids 
in $ H(\varTheta) $.
\begin{lemma}\label{lem: Laviolette key}
Let $ \mathcal{E}\prec H(\varTheta) $ with $ \left|\mathcal{E}\right|\subseteq 
\mathcal{E} $. Then for every finitary matroid $ M=(E,\mathcal{C})\in \mathcal{E} $ and for every $D\in \mathcal{C}^{*} $ with $ D \cap 
\mathcal{E}\neq 
\emptyset $ we have: $ D \subseteq \mathcal{E} $ if $ \left|D\right| \leq \left|\mathcal{E}\right| $ and $ 
\left|D\cap\mathcal{E}\right|=\left|\mathcal{E}\right| $ 
if $ \left|D\right| > \left|\mathcal{E}\right| $.
\end{lemma}
\begin{proof}
We may assume that $ D\setminus \mathcal{E}\neq \emptyset $ since if $ D\subseteq \mathcal{E} $, then the statement holds trivially. We 
need to show that $ 
\left|D\right| > \left|\mathcal{E}\right| $ and 
$\left|D\cap\mathcal{E}\right|=\left|\mathcal{E}\right| $. Let $ e\in D\cap \mathcal{E} $ and $ f\in D\setminus \mathcal{E} $. 
By Fact \ref{fact: 
circ cocirc meets 2}, there is 
a $ C\in \mathcal{C} $ 
with $ 
C \cap D=\{ e,f \} $. Let $ F:=C\cap \mathcal{E} $ and $ \mathcal{X}:=\{X\subseteq E\setminus F:\ F\cup X\in \mathcal{C}  \} $. Note that each $ 
X\in \mathcal{X} $ 
meets $ D $ since otherwise $ (F\cup X)\cap D=\{ e \} $ contradicts $ D\in \mathcal{C}^{*} $. Clearly, $ C \setminus 
\mathcal{E}\in \mathcal{X} $ is disjoint from $ \mathcal{E} $. Let $ \mathcal{Y}\in \mathcal{E} $ be a maximal family of 
pairwise disjoint 
elements of $ 
\mathcal{X} $.  By Fact \ref{fact: lot of disjoint} we know that $ \left|\mathcal{Y}\right|>\left|\mathcal{E}\right| $ and $ 
\left|\mathcal{Y}\cap 
\mathcal{E}\right|=\left|\mathcal{E}\right| $. Since the elements of $ \mathcal{Y} $ are pairwise disjoint and each of them 
meets $ D $,  we 
conclude that $ 
\left|D\right|>\left|\mathcal{E}\right| $ and $ \left|D\cap\mathcal{E}\right|=\left|\mathcal{E}\right|$.
\end{proof}

Let $ M=(E, \mathcal{C})\in H(\varTheta) $ be a finitary matroid. We apply transfinite induction on $ \kappa:=\left|E\right| $. 
If $\kappa\leq 
\lambda $, then the singleton partition $ \left\lbrace E \right\rbrace  $ is $\lambda$-cocircuit faithful. If $\kappa>\lambda $, then let   $ \left\langle 
\mathcal{E}_\alpha:\ 
\alpha< \kappa  \right\rangle $ be as in Fact \ref{fact: elementary chain} where $ x=M $. By Lemma 
\ref{lem: Laviolette key}, for every $ 
D\in \mathcal{C}^{*} $ and for the smallest ordinal $ \alpha+1 $ for which $ D\cap\mathcal{E}_{\alpha+1} 
\neq \emptyset $, we have $ D\subseteq\mathcal{E}_{\alpha+1}\setminus \mathcal{E}_{\alpha} $ if $ \left|D\right|\leq 
\lambda $ and $ \left|D\cap 
(\mathcal{E}_{\alpha+1}\setminus \mathcal{E}_{\alpha}) \right|=\lambda $ if $ \left|D\right|>\lambda $.  We define 
$ 
F_\alpha:=E 
\cap (\mathcal{E}_{\alpha+1}\setminus \mathcal{E}_\alpha) $ for $ \alpha<\lambda $. Then the 
partition $ E=\bigcup_{\alpha<\lambda}F_\alpha $ satisfies \ref{item: faithful} and  \ref{item: is somewhere}, moreover $ 
\left|F_\alpha\right|<\kappa $ for each $ \alpha<\kappa $. Apply the induction hypothesis to $ M | 
F_\alpha $ and let $ 
F_\alpha= \bigcup_{i \in I_\alpha}F_{\alpha, i} $ be the 
resulting partition. We claim that $ E= \bigcup_{\alpha<\lambda, i\in I_\alpha}F_{\alpha, i} $ is a $ \lambda $-cocircuit-faithful partition of $ 
M $. Property \ref{item: small} is clear by the induction hypothesis. If $ D\in \mathcal{C}^{*}(M | 
F_{\alpha, i}) $ with $ 
\left|D\right|<\kappa $, then by induction $ D\in \mathcal{C}^{*}(M | F_{\alpha}) $. But then  $ D\in 
\mathcal{C}^{*} $ because partition $ E=\bigcup_{\alpha<\lambda}F_\alpha $ satisfies \ref{item: faithful}. Therefore 
\ref{item: faithful} holds 
true. Finally, let $ D\in \mathcal{C}^{*} $ with $ \left|D\right|\leq \kappa $. Then 
there is an $ \alpha $ with $ D\in \mathcal{C}^{*}(M | F_\alpha) $ because partition $ 
E=\bigcup_{\alpha<\lambda}F_\alpha 
$ satisfies \ref{item: is somewhere}. But then by induction there is an $ i\in I_\alpha $ 
such that $ D\in \mathcal{C}^{*}(M | F_{\alpha,i}) $. Thus \ref{item: is somewhere} holds as well.
\end{proof}
\section{Farkas lemma in finitary oriented matroids}\label{sec: farkas}
First, we give a simple example due to Kaspar showing that the Farkas lemma (Fact \ref{fact: Farkas lemma}) does not 
extend to finitary oriented matroids. Then we prove 
that it does extend for 
binary ones. 
\begin{claim}\label{claim: Farkas fail}
There exists a finitary oriented matroid $ \vec{M}=(E, \mathcal{O}) $ that has neither a positive circuit nor a positive 
cocircuit.
\end{claim}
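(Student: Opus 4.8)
The plan is to give an explicit finitary (indeed representable) counterexample, using that the finite Farkas lemma (Fact~\ref{fact: Farkas lemma}) forces any such example to genuinely exploit infiniteness. The mechanism I would use is that an infinite vector configuration can lie inside an open half-plane yet possess no \emph{extreme} direction. Concretely, I would set $E:=\mathbb{Z}$ and assign to each $n\in\mathbb{Z}$ the vector $v_n:=(n,1)\in\mathbb{Q}^2$. A vector system over an ordered field determines a finitary oriented matroid (its circuits are the minimal dependent sets, and the orientation of a circuit is read off from the signs of the coefficients in its defining linear dependence), so it suffices to locate the directed circuits and cocircuits and show that neither type occurs.

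Because the $v_n$ span the plane and are pairwise non-parallel, the underlying matroid is uniform of rank~$2$: its circuits are exactly the $3$-element subsets, and its cocircuits are exactly the co-singletons $\mathbb{Z}\setminus\{n\}$. For a circuit $\{a,b,c\}$ the essentially unique linear dependence has coefficients summing to $0$, since all second coordinates equal $1$; hence these coefficients cannot all be of one sign, so no circuit is positive. For the cocircuit $\mathbb{Z}\setminus\{n\}$, a defining functional is $\phi_n(x,y):=x-ny$, and $\phi_n(v_m)=m-n$ changes sign as $m$ passes $n$; hence each orientation of the cocircuit takes both signs, so no cocircuit is positive. Combining the two verifications yields the claim.

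The part I expect to carry the real content is the \emph{choice} of configuration rather than either verification, both of which are one-line computations. One must arrange the directions so that they accumulate at both ends of the half-plane $\{y>0\}$ without any direction being extremal: confinement to an open half-plane is exactly what forbids a positive circuit, while the absence of an extremal direction is exactly what forbids a positive cocircuit. A finite configuration can never achieve both simultaneously, which is the content of Fact~\ref{fact: Farkas lemma}; the index set $\mathbb{Z}$ (with its order having neither a least nor a greatest element) is precisely the feature that lets both obstructions fail at once.
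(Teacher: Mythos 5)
Your construction is correct and is in fact the \emph{same} oriented matroid as the one in the paper: with $v_n=(n,1)$ the essentially unique dependence on $\{i,j,k\}$ with $i<j<k$ has signs $(+,-,+)$, and the functional $x-ny$ orients the cocircuit $\mathbb{Z}\setminus\{n\}$ exactly as in the paper's description; both verifications you give are sound. The only difference is presentational, and it is a small improvement: the paper defines the circuit and cocircuit signatures abstractly and then asserts that checking the orthogonality axiom (\ref{ax: orthog}) is routine, whereas your realisation over $\mathbb{Q}^2$ makes it automatic that one has a finitary oriented matroid, so no axiom-checking is needed. Your closing remarks about half-planes and extreme directions correctly identify why finiteness is essential, consistent with Fact~\ref{fact: Farkas lemma}.
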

\begin{proof}
Let $ \vec{M}:=(\mathbb{Z}, \mathcal{O}) $ where $ \underline{\mathcal{O}} $ consists of the $ 3 $-element subsets of $ 
\mathbb{Z} $ and for 
$ 
i<j<k $ the signed set $ \{ \left\langle i, 1  \right\rangle, \left\langle j, -1  \right\rangle, \left\langle k, 1  \right\rangle \} $ and 
its negation are in $ 
\mathcal{O} $. Then $ \underline{\mathcal{O}^{*}}=\{ \mathbb{Z}\setminus \{ n \}:\ n \in \mathbb{Z} \} $ and  $ 
\mathcal{O}^{*} $ consists of  
the signed set $ \{ \left\langle m, 1  \right\rangle :\ m<n  \}\cup \{ \left\langle m,-1  \right\rangle:\ n<m \} $ and its negation 
for each $ n\in 
\mathbb{Z} $. It is straightforward to verify that the orthogonality axiom (\ref{ax: orthog}) holds, thus $ \mathcal{O} $ and $ 
\mathcal{O}^{*} $ are indeed the circuit signature and cocircuit signature of a matroid respectively.
\end{proof}
\Farinf

\begin{proof}
We cannot have both by Fact \ref{fact: bin orthog}. Suppose for a contradiction that there is neither a positive circuit 
 nor a positive cocircuit through $ e_0 \in E $ in $ \vec{M} $. By Fact \ref{fact: Farkas lemma}  and by the non-existence of 
 positive circuits 
 through $ 
 e_0 $, 
 we know 
 that for every finite $ 
 E'\subseteq E $ containing $ e_0 $, we can find a positive cocircuit of $ \vec{M} |  E' $ through $ e_0 
 $. Thus 
 by compactness there exists a set $ F\subseteq E $ containing $ e_0 $ such that   $ 
 \sum_{e\in F \cap \underline{C}}C(e) =0$ for every $ C\in \mathcal{O} $. 
 By Zorn's lemma we can 
 assume that $ F $ is minimal with respect to these properties. Clearly, $ 
 \left|F\cap 
 \underline{C}\right| \neq 1$ for each $ C\in 
 \mathcal{O} $. In particular, there is no $ C\in 
\mathcal{O} $ with $F\cap 
 \underline{C}=\left\lbrace e_0\right\rbrace $. Thus by Fact \ref{fact: football field}
  there is a $ D\in 
 \mathcal{O}^{*} $ with $ e_0\in 
 \underline{D} \subseteq 
 F $. By negating $ D $ if necessary we can assume $ 
 D(e_0)=1 $.  By the indirect assumption $ D $ is not a positive cocircuit, thus there is an $ e_1\in \underline{D} $ with $ 
 D(e_1)=-1 $. Let $\mymathbb{1}_F $ be the characteristic function of $ F $ (defined on $ E $) and let $ 
 \widehat{D} $ be the extensions of $ D $  to the domain $ E $ with $ 0 $ 
 values. We define $ f:= \widehat{D} + \mymathbb{1}_F  $. 
 Then 
 \begin{enumerate}[label=(\roman*)]
 \item\label{item: f noon-negative} $ f \geq 0 $,
 \item\label{item: f e0 is 2} $ f(e_0)=2 $,
 \item\label{item: f is orthog} $ \sum_{e\in \underline{C}}f(e)C(e)=0 $ for every $ C\in \mathcal{O} $.
 \end{enumerate}
 For  the support $ S:=\{ 
 e\in E:\ f(e)\neq 0 \} $  of $ f $ 
 we have
 $S\subseteq F\setminus \{ e_1 \}\subsetneq F $. Thus in order to get a contradiction to the minimality of $ F $, 
we will use compactness to construct a $ G $ with  $ e_0 \in G\subseteq 
 S $ such that $ \sum_{e\in G \cap \underline{C}}C(e)=0 $ for every $ C\in \mathcal{O} $. 
 
  Let  $ S'\subseteq S $ be any finite set containing $ e_0 $. 
 If $ (\vec{M}.S) | S'$ has a positive circuit $ C $ through $ e_0 $, then by Fact \ref{fact: oriented 
 minor} there is a $ C'\in 
 \mathcal{O} $ 
 with $ C' \!\!\upharpoonright\!\! S = C  $. But then  by applying \ref{item: f noon-negative} and \ref{item: f e0 is 2} together 
 with the positivity of 
 $ C $ we obtain
  \[ \sum_{e\in \underline{C'}}C'(e) f(e) =\sum_{e\in \underline{C}}C(e) f(e) \geq f(e_0) C(e_0) =2, \]  which contradicts 
  \ref{item: f is orthog}. 
 Hence there is no 
 such a circuit and therefore  $ (M.S) 
 | S'$ has a 
 positive cocircuit $ D' $
 through $ e_0 $ by Fact \ref{fact: Farkas lemma}. Then $ e_0\in \underline{D'}  \subseteq S  $ 
 and we claim that for every $ C\in \mathcal{O} $ with $ \underline{C}\cap S\subseteq S' $ we have $ 
 \sum_{e\in \underline{D'}\cap \underline{C}} C(e)=0 $. Indeed, $ D' \in \mathcal{O}^{*}(M.S 
 |  S') $ implies that there is a $ D''\in \mathcal{O}^{*} $ 
with $  D''\!\!\upharpoonright\!\!  S'=D' $ and $ \underline{D''}\subseteq S $ (see Fact 
\ref{fact: oriented minor}), thus $ \sum_{e\in \underline{D'}\cap \underline{C}} C(e)=\sum_{e\in \underline{D''}\cap 
\underline{C} } C(e)D''(e)=0 $ because $ \underline{D''}\cap \underline{C}=\underline{D'}\cap \underline{C}\subseteq 
S'$. 
 But then, since the finite 
 set $ S'\subseteq S $ was arbitrary,  it follows by compactness that the desired $ G$ exists, which concludes the proof.
\end{proof}

\section{Partitioning oriented matroids into directed circuits}\label{sec: oriented}
In an oriented matroid $ \vec{M}=(E,\mathcal{O}) $  a $ D\in \mathcal{O}^{*} $ is called \emph{balanced} if $ \left|D^{+}\right| 
=\left|D^{-}\right|$. We 
call $ \vec{M} $ itself \emph{balanced} if every $ D\in \mathcal{O}^{*} $ is balanced. 
\orparti

\begin{proof}
Let $ \vec{M}=(E,\mathcal{O}) $ be a finitary oriented matroid. Suppose that  $ \mathcal{A} $ is a set of directed circuits, 
say positive 
ones,  such that $\underline{ \mathcal{A}} $ is a partition of $ E $. Then, for any $ 
D\in \mathcal{O}^{*} $ and $ s\in \{ +,- \} $, we have $ 
D^{s}=\dot{\bigcup} \{\underline{C}\cap D^{s}:\   C\in \mathcal{A}\}  $. It follows from Fact \ref{fact: bin orthog}, that 
we 
must have $ \left|\underline{C}\cap D^{+}\right|=\left|\underline{C}\cap D^{-}\right| $ for every positive circuit $ C $, 
therefore 
\[ \left|D^{+}\right|=\sum_{C\in \mathcal{A}}\left|\underline{C}\cap D^{+}\right|=\sum_{C\in 
\mathcal{A}}\left|\underline{C}\cap D^{-}\right|=\left|D^{-}\right|. \]  

We turn to the 
non-trivial direction. Let a regular uncountable cardinal $ \varTheta $ be fixed. We need a couple of lemmas:
\begin{lemma}\label{lem: remove balanced}
If $ \mathcal{E}\prec H(\varTheta) $ with $ \left|\mathcal{E}\right|\subseteq 
\mathcal{E} $, then for every finitary balanced oriented matroid $ \vec{M}\in \mathcal{E} $,  the matroid $ 
\vec{M}\setminus 
\mathcal{E}$ is also balanced.
\end{lemma}
\begin{proof}
Let $ D_0\in \mathcal{O}^{*}(\vec{M}\setminus \mathcal{E}) $ be given. By Fact \ref{fact: oriented minor} there is a $ 
D_1\in 
\mathcal{O}^{*}(\vec{M}) $ such that $ D_0= D_1 \!\!\upharpoonright\!\! (E\setminus \mathcal{E}) $. If $ 
\underline{D_1}\cap 
\mathcal{E}=\emptyset $, then $ D_0=D_1 \in  
\mathcal{O}^{*}(\vec{M})$ and 
we are done because $ D_1 $ is balanced by assumption. Suppose that $ \underline{D_1}\cap \mathcal{E}\neq \emptyset $. 
Then it follows by applying 
Lemma 
\ref{lem: Laviolette key} with $ \underline{D_1} $ and $ M $ that we must have
$ \left|\underline{D_1}\right|>\left|\mathcal{E}\right| $. Since $ \left|D_1\setminus D_0\right|<\left|\mathcal{E}\right| $, we 
conclude that $ 
\left|D_1^{s}\right|=\left|D_0^{s}\right| $ for $ s\in \{ +,- \} $, therefore
\[ \left|D_0^{+}\right|=\left|D_1^{+}\right|=\left|D_1^{-}\right|=\left|D_0^{-}\right|. \]
\end{proof}
\begin{lemma}\label{lem: restrict balanced}
If $ \mathcal{E}\prec H(\varTheta) $ with $ \left|\mathcal{E}\right|\subseteq \mathcal{E} $, then for every finitary binary 
balanced  oriented matroid 
$ \vec{M}=(E,\mathcal{O})\in \mathcal{E} $, the 
matroid $ 
\vec{M}| \mathcal{E} 
$ is also balanced.
\end{lemma}
\begin{proof}
Let $ D\in 
\mathcal{O}^{*}(\vec{M}|  
\mathcal{E}) $ be arbitrary. By symmetry we may 
assume that $\left| D^{+}\right| \geq \left| D^{-}\right|  $. We need to show that $\left| D^{+}\right| \leq \left| D^{-}\right|  $. 
Clearly, $ D^{+}\neq \emptyset  $ because $ D\neq \emptyset  $. Fix 
an $ e_0\in D^{+} $  and let $ 
\vec{N}:=\vec{M}/(\mathcal{E}\setminus \underline{D}) \setminus (\underline{D} \setminus \{ e_0\}) $. 
We apply Lemma \ref{lem: Farkas property} 
with $ e_0 $ and $\vec{N} $. Suppose first that there is a positive  $ D_0 \in \mathcal{O}^{*}(\vec{N}) $ through $ e_0 $. 
Then there is a $ D_1\in \mathcal{O}^{*}(\vec{M}) $ with $ D_1\!\!\upharpoonright\!\! ((E\setminus \underline{D})\cup 
\{ e_0 \})=D_0 $ and $  \underline{D_1}\cap \mathcal{E}\subseteq 
\underline{D} $  (see Fact 
\ref{fact: oriented minor}).  We must have $ \underline{D_1}\cap \mathcal{E}= \underline{D} $ because $ \underline{D_1}\cap \mathcal{E} 
$ includes a  cocircuit of $M  |  \mathcal{E} $ by Fact \ref{fact: contract cocircuit} but this cannot 
be a proper subset 
of another cocircuit $ \underline{D} $. Then $ 
D_1 \!\!\upharpoonright\!\!  \mathcal{E}=\pm 
D $ and by $ D_1(e_0)=D(e_0)=1 $ we 
conclude that $ D_1 \!\!\upharpoonright\!\!  \mathcal{E}=D $. But then by the positivity of $ D_1 \setminus D $ and by the 
assumption that $ 
\vec{M} $ 
is balanced: 
\[\left|D^{+}\right|\leq \left|D_1^{+}\right|=\left|D_1^{-}\right|=\left|D^{-}\right|. \]

Assume now that there is a positive $ C_0\in \mathcal{O}(\vec{N}) $ through $ e_0 
$. Then there 
is a $C_1\in  \mathcal{O}(\vec{M}) $ with $ C_1\!\!\upharpoonright\!\! (E\setminus (\mathcal{E}\setminus \underline{D})) 
=C_0 $. Let $ F:=C_1\cap \mathcal{E}  $ 
and let  
$\mathcal{X}\in \mathcal{E} $ 
be the set of those  positive signed sets $ X $ with $\underline{X}\subseteq E \setminus \underline{F} $
for which 
$ F \cup X\in \mathcal{O}(\vec{M}) $. Then $ \underline{C_1}\setminus 
\mathcal{E} \in\underline{\mathcal{X}} $ is disjoint from $\mathcal{E}$  and $\underline{C_1}\setminus 
\mathcal{E}\neq \emptyset$ because $e_0$ cannot be an $M$-loop. It follows by Fact \ref{fact: lot of disjoint} that one can find  
$\left|\mathcal{E}\right| 
$ many pairwise disjoint elements in
$\underline{\mathcal{X}}\cap\mathcal{E}$. To prove $\left| D^{+}\right| \leq \left| D^{-}\right|  $, it suffices to show 
that for each  $ X\in \mathcal{X}\cap \mathcal{E} $, there is an $ e\in \underline{D} \cap \underline{X} $ with $ D(e)=-1 $. 
Suppose that $ X $ 
is a 
counterexample, and let $ D'\in \mathcal{O}^{*}(\vec{M}) $ with $D'\!\!\upharpoonright\!\! \mathcal{E} = D$. Note that 
$ \underline{D'}\cap (\underline{F}\cup \underline{X})=\underline{D}\cap (\underline{F}\cup \underline{X})$ because $ 
\underline{F}\cup \underline{X} \subseteq \mathcal{E} $ and recall that $ \underline{F}\cap 
\underline{D}=\{ e_0 \} $ by construction. Then by using the indirect 
assumption and the positivity 
of $ X  $:
\[ 1=D(e_0)F(e_0)\leq 
\sum_{e\in \underline{D}\cap (\underline{X} \cup \underline{F})}D(e)(F\cup X)(e)=\sum_{e\in \underline{D'}\cap 
(\underline{X} \cup \underline{F})}D'(e)(F\cup X)(e). \]
 But $ D' \in \mathcal{O}^{*}(\vec{M}) $ and $ F\cup X \in \mathcal{O}(\vec{M}) $, thus this contradicts Fact \ref{fact: bin orthog}.
\end{proof}

 We prove by induction on $ 
\kappa:=\left|E\right| $ that every finitary binary balanced oriented matroid $ \vec{M}=(E,\mathcal{O}) \in H(\varTheta) $ 
admits a partition into 
positive circuits.
\begin{observation}\label{obs: balanced e in pos circ}
In a finitary binary balanced oriented matroid every edge is contained in a positive circuit.
\end{observation}
\begin{proof}
This follows directly from Lemma \ref{lem: Farkas property} because being balanced ensures the non-existence of positive 
cocircuits.
\end{proof} 
\begin{observation}\label{obs: remains balanced}
If $ \mathcal{A} $ is a finite set of pairwise disjoint  positive circuits of $ \vec{M} $, then $ \vec{M}\setminus 
\bigcup\underline{\mathcal{A}} $ 
is balanced.
\end{observation}
\begin{proof}
Let $ D\in \mathcal{O}^{*}(\vec{M}\setminus \bigcup\underline{\mathcal{A}}) $. Pick a  $ D'\in \mathcal{O}^{*} $ with 
$ D'\!\!\upharpoonright\!\!  (E\setminus \bigcup\underline{\mathcal{A}})=D $. Since $ \vec{M} $ is binary and each $ C\in 
\mathcal{A} $ is positive, Fact \ref{fact: bin orthog} ensures that $ D' $ has the same number of positive and negative edges 
in $ 
\bigcup\underline{\mathcal{A}} $. But then so does $ D $ because $ D' $ is balanced by assumption.
\end{proof}
\noindent If  $ \kappa \leq \aleph_0 $, then the desired 
partition can be obtained by a straightforward recursion via Observations \ref{obs: balanced e in pos 
circ} and \ref{obs: remains balanced}. Indeed,  we fix a $\kappa$-type well-order of $ E $  and in step $ n $ apply 
Observation \ref{obs: balanced e in pos circ} to $ \vec{M} \setminus \bigcup_{i<n}\underline{C_i} $ (which is balanced by 
Observations 
\ref{obs: remains balanced}) to find a positive circuit $ C_n $  disjoint from $ C_0,\dots, C_{n-1} $ through 
the smallest element of $ E\setminus \bigcup_{i<n}C_i $ . 

Suppose $ \kappa >\aleph_0 $. Let $ \left\langle \mathcal{E}_\alpha:\ \alpha<\kappa  \right\rangle $ be as in Fact \ref{fact: 
elementary chain} with 
$ x=\vec{M} $ and $ \lambda=\aleph_0 $. Then, for each $ \alpha<\kappa $: $ \vec{M}\setminus \mathcal{E}_\alpha  $ is 
balanced by Lemma 
\ref{lem: remove balanced}. Since $ \vec{M}, \mathcal{E}_\alpha\in \mathcal{E}_{\alpha+1} $, we have $ 
\vec{M}\setminus 
\mathcal{E}_\alpha \in \mathcal{E}_{\alpha+1} $, thus by Lemma \ref{lem: restrict balanced} we conclude that 
$ (\vec{M}\setminus \mathcal{E}_\alpha) | \mathcal{E}_{\alpha+1} $ is balanced. By the induction 
hypotheses, there is a 
partition $ \mathcal{A}_\alpha $  of $ (\vec{M}\setminus \mathcal{E}_\alpha) | 
\mathcal{E}_{\alpha+1} $ into positive 
circuits. Finally, $ \bigcup_{\alpha<\kappa}\mathcal{A}_\alpha $ is a partition of $ \vec{M} $ into positive circuits.
\end{proof}

\section{Open questions}\label{sec: open questions}
A concept of infinite matroids, which is more general than finitary matroids, was introduced by Higgs and was investigated 
by Higgs and Oxley in the
1960s and 1970s  (see \cite{higgs1969matroids, oxley1978infinite}). The investigation of this concept gained a new 
momentum half a century later
when Bruhn et al.  rediscovered it independently and provided five sets of crypcryptomorphic 
axiomatisations (see \cite{bruhn2013axioms}). The axiomatisation in terms of independent sets reads as follows: 
$ M=(E, \mathcal{I}) $ is a matroid if $ \mathcal{I}\subseteq \mathcal{P}(E) $ such that
\begin{enumerate}
\item $ \emptyset \in \mathcal{I} $,
\item $ \mathcal{I} $ is downward closed,
\item whenever $ B $ is a maximal and $ I $ is a non-maximal element of $ \mathcal{I} $, then there is an $ e \in B 
\setminus I $ 
such that $ I \cup \{ e \}\in \mathcal{I} $,
\item for every $ X\subseteq E $, every $ I\in \mathcal{P}(X) \cap \mathcal{I} $ can be extended to a maximal element of $ 
\mathcal{P}(X) \cap \mathcal{I} $.
\end{enumerate}

Finitary matroids are exactly those matroids in which all circuits are finite. Matroids that are not finitary are called 
\emph{infinitary}. Relatively little is known about perfect matchings (i.e. circuit-partitions) of infinitary matroids. Next we formulate a question 
ispired by Theorem \ref{thm: fme match}.

 For an infinite cardinal $ \kappa $, we call a (not 
necessarily finitary) matroid   $ \kappa $-\emph{matching 
 extendable}  if for every matching (i.e. set of pairwise disjoint circuits) $ \mathcal{A} $  of size less than $ \kappa $ and for 
 every edge  $ e \in E\setminus \bigcup \mathcal{A} $ there is a circuit $ C $ through $ e $ with $ C \cap \bigcup 
 \mathcal{A} =\emptyset  $.

 \begin{question}
 Is it true that every $ \kappa $-matching extendable matroid whose circuits are smaller than $ \kappa $ admits a perfect 
 matching? 
 \end{question}
\noindent The positive answer for $ \kappa=\omega $ is exactly Theorem \ref{thm: fme match}.
 
The definition of the oriantability of infinitary matroids is somewhat problematic in general (see \cite{hoch2015orthogax}) 
but works well for a certain subclass. A matroid is called \emph{tame} if the intersection of any circuit with any cocircuit is 
finite. As in the finitary case, a tame matroid is orientable if there is a signature of its circuits and cocircuits 
such that they satisfy (\ref{ax: orthog}). We do not know whether ``finitary'' can be replaced by the weaker assumption 
``tame'' in Lemma \ref{lem: Farkas property}, i.e. the following question is open:

\begin{question}
Is it true that in every tame binary oriented matroid $ \vec{M}=(E, \mathcal{O}) $,  for every $ e\in E $, there is either a 
positive 
circuit 
or a positive cocircuit through $ e $?
\end{question}

\printbibliography
\end{document}